\documentclass[11pt]{article}
\usepackage[utf8]{inputenc}
\usepackage{lmodern}
\usepackage{subfiles}
\usepackage{enumitem}
\setenumerate{topsep=6pt,ref={\normalfont(\arabic*)},label={\normalfont(\arabic*)}, itemsep=0pt} 
        
\usepackage{amsfonts}
\usepackage{amsthm}
\usepackage{amsmath}
\usepackage{amssymb}
\usepackage{enumitem}
\usepackage{amscd}
\usepackage{mathrsfs}
\usepackage{mathtools}
\usepackage{bbm}
\usepackage{esint}

\usepackage[margin=3cm]{geometry}
\usepackage{setspace}
\usepackage{indentfirst}
\usepackage{graphicx}
\usepackage{graphics}
\usepackage{lscape}
\usepackage{pgf,tikz}
\usepackage{tikz-cd}
\usepackage{color}
\usepackage{pict2e}
\usepackage{epic}
\usepackage{epstopdf}
\usepackage{titlesec, titlefoot}
	\titleformat{\section}[block]{\Large\bfseries\filcenter}{\thesection}{1em}{}
\usepackage{commath}
\usepackage{float}
\usepackage{caption}
\usepackage{etoolbox}
\usepackage[affil-it]{authblk}
\usepackage{combelow}

\usepackage[hidelinks, bookmarksdepth=3,citecolor=blue,colorlinks]{hyperref}

\graphicspath{{./Pictures/}}
\allowdisplaybreaks

\expandafter\def\expandafter\normalsize\expandafter{%
    \normalsize
    \setlength\abovedisplayskip{6pt}
    \setlength\belowdisplayskip{6pt}
    \setlength\abovedisplayshortskip{6pt}
    \setlength\belowdisplayshortskip{6pt}
}

\theoremstyle{plain}

\renewcommand*\thesection{\arabic{section}}
\numberwithin{equation}{section}

\newtheorem{theorem}{Theorem}[section]
\newtheorem{lemma}[theorem]{Lemma}
\newtheorem*{lemma*}{Lemma}
\newtheorem{proposition}[theorem]{Proposition}

\theoremstyle{definition}
\newtheorem{definition}[theorem]{Definition}

\newtheorem*{remark*}{Remark}

\expandafter\let\expandafter\oldproof\csname\string\proof\endcsname
\let\oldendproof\endproof
\renewenvironment{proof}[1][\proofname]{%
  \oldproof[\upshape \bfseries #1]%
}{\oldendproof}

\makeatletter
\def\@makechapterhead#1{%
  \vspace*{50\p@}%
  {\parindent \z@ \raggedright \normalfont
    \interlinepenalty\@M
    \Huge\bfseries  \thechapter.\quad #1\par\nobreak
    \vskip 40\p@
  }}
\makeatother

\renewcommand{\Re}{\operatorname{Re}}
\renewcommand{\Im}{\operatorname{Im}}

\def \R {\mathbb{R}}

\def \C{\mathbb{C}}

\def \D{\textup{D}}

\def \T{\mathbb{T}}
\def \e{\varepsilon}
\def \d{\textup{d}}

\def \p{\partial}
\def \mc{\mathcal}
\def \mb{\mathbb}

\def \loc{\textup{loc}}

\newcommand{\ol}{\overline}

\DeclareMathOperator{\co}{co}
\DeclareMathOperator{\dist}{dist}

\DeclareMathOperator{\supp}{supp}

\DeclareMathOperator{\Lip}{Lip}

\begin{document}

	\title{\textbf{A singular minimizer of a smooth strongly polyconvex functional in two dimensions}}
		
	\author[1]{{\Large Andr\'e Guerra}}
	\author[2]{{\Large Riccardo Tione}}
		
	\affil[1]{\small Department of Pure Mathematics and Mathematical Statistics,  University of Cambridge,\protect\\  Wilberforce Rd, Cambridge CB3 0WB, UK
	\protect\\
	{\tt{adblg2@cam.ac.uk}} \vspace{1em} \ }
		
	\affil[2]{\small Dipartimento di Matematica, Universit\`a degli Studi di Torino,
Via Carlo Alberto 10, 10123 Torino, Italy 
	\protect\\
	{\tt{riccardo.tione@unito.it}}  }

	\date{}
	
	\maketitle
	
	\begin{abstract}
	We construct a Lipschitz, non-$C^1$ map $u\colon \R^2\to \R^6$ which is the unique global minimizer of a smooth, strongly polyconvex functional. Such an example is necessarily non-homogeneous.
	\end{abstract}

\section{Introduction}

	We study minima of vectorial variational integrals
	$$\mb E[u] \equiv \int_\Omega F(\D u) \d x, \qquad u\colon \Omega\to \R^{m},\qquad m\geq 2,$$	
	where $F\in C^\infty(\R^{m\times n})$ and $\Omega\subset \R^n$ is a domain.  We will assume throughout that 
	$$|F''(A)|\leq C\quad \text{for all }A\in \R^{m \times n}.$$
	Our focus is on the case where $F$ is strongly polyconvex:
	
	\begin{definition}\label{def:pc}
		 The integrand $F\colon \R^{m\times n}\to\R$ is
		 \textit{polyconvex} if there is a convex function
		 $f\colon \R^{\tau(m,n)}\to\R$ such that
		 $F(A)=f(M(A))$, where $M(A)\in\R^{\tau(m,n)}$ is the vector of
		 all minors of $A$ of orders $1,\ldots,\min\{m,n\}$, and $\tau(m,n)\equiv
		 \sum_{k=1}^{\min\{m,n\}}\binom{m}{k}\binom{n}{k}.$ 
		 
		 $F$ is
		 \textit{strongly polyconvex} if there is $\e>0$ such that
		 $F-\e|\cdot|^2$ is polyconvex.
	\end{definition}
	
	Polyconvexity is a natural condition for the existence of minimizers in vectorial problems \cite{Dacorogna2007} and it holds in most of  the energies of interest in Nonlinear Elasticity \cite{Ball1977}, Geometric Function Theory \cite{Iwaniec2001}, and Geometric Measure Theory \cite{DeLellis2019}.
	
	Even if $F$ is convex, in general minimizers are not smooth as soon as $n\geq 3$ \cite{Mooney2016}, and hence the best one can hope for is partial regularity. In this case, the strongest result available \cite[\S 4.4]{Ambrosio2018} asserts that there is a closed singular set $\Sigma_u$ such that $u \in C^\infty(\mb B^n\setminus \Sigma_u),$ and
\begin{equation}
\label{eq:convexbound}
 \dim_H(\Sigma_u)\leq n-2-\delta, \qquad \delta=\delta(u,F)>0.
\end{equation}
	The example in \cite{Mooney2016}, together with its predecessors \cite{Hao1996,Necas1977,SverakYan1999,SverakYan2002} in higher dimensions, shows that in general $\delta\leq 1$. We also emphasize that, in all of these examples, the map $u$ is homogeneous.
	
	By contrast, much less is known for polyconvex functionals.\ For Lipschitz minimizers,  we again have $u \in C^\infty(\mb B^n\setminus \Sigma_u),$ this time with
\begin{align}
\label{eq:polybound}
\Sigma_u \text{ is uniformly porous }
\qquad \implies \qquad \dim_H(\Sigma_u) \leq n-\delta, \quad \text{ } \delta=\delta(u,F)>0,
\end{align}
as shown in \cite{Kristensen2007}.\ In particular, the methods of \cite{Kristensen2007} necessarily yield $\delta\leq 1$, since hyperplanes are uniformly porous.\ The estimate on the dimension of the singular set was very recently extended to non-Lipschitz minimizers in \cite{DeFilippis2026}.\ Classical works on non-Lipschitz minimizers include the original partial regularity results \cite{Evans1986}, but see also \cite{Acerbi1987,Carozza2007,Esposito2001,Fusco1991,Gmeineder2022,Hamburger2003,Schmidt2009} for results under more general growth conditions. Stronger results are available for particular functionals, e.g.\ for the area functional \cite{Giaquinta2012,Hirsch2023} or functionals with some symmetries \cite{GuerraTione2024,GuerraTione2026}.

It has remained a long-standing problem to improve \eqref{eq:polybound}, or to construct examples showing otherwise.  We briefly explain the difficulties underlying this problem:
\begin{enumerate}
\item the bound \eqref{eq:convexbound} is by now standard, and is proved by differentiating the Euler--Lagrange system. By contrast, in the polyconvex setting the Euler--Lagrange system admits solutions which are singular everywhere \cite{Muller2003,Szekelyhidi2004};
\item no examples of singular minimizers for polyconvex, non-convex functionals were known. Since, by \eqref{eq:convexbound}, minimizers of convex functionals are regular when $n=2$, in this case no examples of singular minimizers for polyconvex functionals were known;
\item to construct singular Lipschitz minimizers it is natural to look for 1-homogeneous examples, and indeed, as mentioned above, all known examples of singular minimizers are homogeneous. Yet, when $n=2$, any 1-homogeneous solution of the Euler--Lagrange system of a polyconvex functional is necessarily smooth \cite{Phillips2002}.
\end{enumerate}

We overcome the above difficulties and construct for the first time a singular minimizer for a non-convex, strongly polyconvex functional, and we do so in the lowest possible dimension $n=2$. This shows that the best general upper bound one can hope for is $\dim_H(\Sigma_u)\leq n-2$.

To describe our example,  let $u\colon \C\cong \R^2\to \R^{2N}\cong \C^N$, $u=(u^1, \dots, u^N)$, be defined by
\begin{equation}
\label{eq:defu}
u^j(z) \equiv r^{1+ i \omega_j} e^{i k_j\theta}, \qquad j=1,\dots, N.
\end{equation}
The possibility of using maps of this form to construct a counterexample was suggested in a beautiful lecture by \v Sver\'ak \cite{Sverak2025}, where this ansatz is attributed to B. Kirchheim and V. \v Sver\'ak; see also \cite[p.\ 152]{Muller1999a}. We will make what is, in some sense, the simplest possible choice of frequencies: 
\begin{equation}
\label{eq:choicek}
N=3, \qquad (k_1, \omega_1)=(1,1), \qquad (k_2,\omega_2)=(1,2), \qquad (k_3,\omega_3)=(3,3).
\end{equation}

Our main theorem is the following:
\begin{theorem}\label{thm:main}
Let $u\in W^{1,\infty}(\mb B^2,\R^6)$ be the map specified by \eqref{eq:defu}--\eqref{eq:choicek}. There is a smooth, strongly polyconvex integrand $F\colon \R^{6\times 2}\to \R$, with $|F''|\leq C$, such that
$$\int_{\mb B^2} F(\D u) \, \d x \leq \int_{\mb B^2} F(\D v) \, \d x \qquad \text{for all } v\in u+W^{1,2}_0(\mb B^2,\R^6),$$
with equality if and only if $u=v$. 
\end{theorem}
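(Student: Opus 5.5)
Since $F$ will be strongly polyconvex, we establish minimality through a calibration built from null Lagrangians: a convex minorant that touches $F$ exactly along the set of gradients of $u$. Concretely, we look for a polyconvex $F(A)=f(A,\det_{jl}(A))$, where $\det_{jl}(A)$ denotes the $2\times2$ minors formed from rows $j,l$ of $A\in\R^{6\times 2}$. The sufficient condition is the following. Suppose there is a map $\Phi\colon \mb B^2\to\R^{6\times2}\times\R^{15}$, say $\Phi=(\sigma(x),\mu(x))$, which is divergence free in the sense of the Euler--Lagrange system of $\mb E$ written in terms of minors. Suppose also that $f$ is convex, that $f(M(\D u(x)))$ is finite, and that
$$ f(M(A))\ge f(M(\D u(x)))+\langle \Phi(x), M(A)-M(\D u(x))\rangle $$
holds for all $A$ and all $x$, with strict inequality when $A\neq \D u(x)$. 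Then for $v\in u+W^{1,2}_0$ we integrate, using that $\int \langle \sigma,\D v-\D u\rangle$ vanishes by the divergence condition and that the integrals of minors are null Lagrangians. The minor terms need care: for $W^{1,2}$ maps they are $L^1$ and weakly continuous, and we use that $\mu$ is Lipschitz/bounded. This yields $\mb E[v]\ge \mb E[u]$, with equality forcing $\D v=\D u$ a.e., hence $v=u$ by the boundary condition. Strong polyconvexity follows by adding $\e|A|^2$ to $f$ and correcting $\sigma$ by $2\e \D u$, which is admissible provided $\Delta u$ is compatible; alternatively we absorb it by requiring the support inequality with a quadratic margin.

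The construction proceeds in three steps. \textbf{Step 1:} compute $\D u$. Writing $u^j=r^{1+i\omega_j}e^{ik_j\theta}$, the gradient has the form $r^{i\omega_j}e^{ik_j\theta}$ times a fixed complex-linear/antilinear matrix. Hence the set $K=\{\D u(x)\}$ is a compact 2-parameter family $\{A(s,\theta)\}$, with $s=\log r$ entering only through phases $e^{i\omega_j s}$. The choice \eqref{eq:choicek} is designed so that the map $(s,\theta)\mapsto \D u$ is injective modulo the obvious period lattice and the family is "rigid"; in particular, distinct points of $K$ have no rank-one connections, because $\det(A-B)\neq0$ in some $2\times2$ block. \textbf{Step 2:} construct the multipliers $(\sigma,\mu)$ along $K$ as functions of $(s,\theta)$, homogeneous of degree $0$ in $r$ up to the log-phase. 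We make an ansatz in which $\sigma$ and the $\mu_{jl}$ are linear combinations of the same oscillatory modes $r^{i(\pm\omega_j\pm\omega_l)}e^{i(\pm k_j\pm k_l)\theta}$. The divergence equation then reduces to a finite linear algebra problem over the resonant frequencies. The frequencies $(1,1),(1,2),(3,3)$ are chosen so that the relevant resonances ($k_1+k_2=\dots$, $\omega_3=\omega_1+\omega_2$, $k_3=k_1+2k_2$, etc.) allow nontrivial minor multipliers to be built.

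\textbf{Step 3}, which I expect to be the main obstacle, is the extension. We must find a convex $f$ on $\R^{12}\times\R^{15}$ with prescribed values and subgradients $\Phi$ on the compact set $M(K)$. By the standard Kirchheim--Székelyhidi type criterion, this is possible, and moreover smooth with bounded Hessian, if and only if the strict condition
$$ f_P>f_Q+\langle \Phi_Q, M(P)-M(Q)\rangle + c|P-Q|^2 $$
holds for all distinct $P,Q\in K$. Away from the diagonal this is a finite computation on a compact torus, and it is where the specific frequencies are used; we would verify it numerically-rigorously or by explicit trigonometric estimates. Near the diagonal a second-order expansion turns it into a positive-definiteness condition on the tangent directions of $K$. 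This is the Legendre--Hadamard type condition coming from the minors, which the $\mu$-terms must make strictly positive. Once the inequality holds, we take $f$ as a smoothed supremum of affine functions plus $\e|\cdot|^2$ and cut off at infinity to get $|F''|\le C$. The non-$C^1$ character of $u$ at $0$ (from the oscillation $r^{i\omega_j}$) is harmless, since $\D u\in L^\infty$ and the calibration argument is global.
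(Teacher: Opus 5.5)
Your proposal has a genuine gap at the integration step. Minors are null Lagrangians only against \emph{constant} coefficients. Write $v=u+\phi$. The part of $\int\langle\mu(x),M(\D v)-M(\D u)\rangle\,\d x$ that is linear in $\phi$ is absorbed by the Euler--Lagrange system. The quadratic remainder is not: $\int\mu_{jl}(x)(\p_1\phi^j\p_2\phi^l-\p_2\phi^j\p_1\phi^l)\,\d x=-\int\phi^j(\p_1\mu_{jl}\,\p_2\phi^l-\p_2\mu_{jl}\,\p_1\phi^l)\,\d x$. For $\mu_{jl}=x_1$ and $\phi^l=\pm\p_2\phi^j$ it equals $\pm\int(\p_2\phi^j)^2\,\d x$, so it has no sign.

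Hence the oscillatory multipliers of your Step 2 do not give a calibration. Similarly, $|\p_z\p_{\bar z}u^1|=\frac{\sqrt5}{4r}\neq 0$, so shifting $\sigma$ by $2\e\D u$ breaks $\operatorname{div}\sigma=0$.

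Suppose you repair this by taking $\mu$ constant and $\Phi_Q=(\sigma_Q,\mu)$ with $x\mapsto\sigma_{\D u(x)}$ divergence free. Then the quadratic-margin inequality of Step 3 cannot hold. Exchange $P,Q$ and add: the $\mu$-terms cancel and you get $\langle\sigma_P-\sigma_Q,P-Q\rangle\geq 2c|P-Q|^2$ on $K$. Now test $\operatorname{div}\bigl(\sigma_{\D u(\cdot+h)}-\sigma_{\D u}\bigr)=0$ with $\eta^2(u(\cdot+h)-u)$, using that $\sigma$ is Lipschitz on $K$. This gives $\int\eta^2|\D u(\cdot+h)-\D u|^2\,\d x\leq C|h|^2$, i.e.\ $\D u\in W^{1,2}_\loc$. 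That is false, since $|\D^2u|\gtrsim 1/r$ at the origin. So the oscillation of $\D u$ at $0$ is not harmless: it rules out any valid calibration of your type with the margin you require. With it goes your route to uniqueness through ``equality forces $\D v=\D u$''.

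The paper instead makes the calibration trivial and moves uniqueness into a rigidity theorem. It builds a strongly polyconvex $F$ with $F\geq\dist_K^2$ and $F^{-1}(0)=K$, where $K=\D u(\mb B^2\setminus\{0\})$. Minimality is then immediate from $F(\D u)=0$. The construction rests on structural facts absent from your plan:
\begin{itemize}
\item $K$ is exactly the zero set of ten real polyaffine relations $\Phi$ with $\ker\Phi'(A)=T_AK$ (Lemmas~\ref{lem:zero-set} and~\ref{lem:transversality});
\item $K\subset\{|A|^2=28\}$.
\end{itemize}
Together these give polyaffine $\Psi_A$ with $\Psi_A'(A)[A-B]=|A-B|^2$, and also $\operatorname{conv}M(K)\cap M(\R^{6\times 2})=M(K)$. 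These feed the gap inequality of Lemma~\ref{lem:convex-extension-minors} and the global lower bound of Lemma~\ref{lem:global-majorant}. The gap inequality is the analogue of your Step 3, but it is needed on a neighbourhood of $K$, with prescribed values $\dist_K^2-\e|\cdot|^2$.

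Because the support now touches along all of $K$, equality only gives $\D v\in K$ a.e., and one needs Theorem~\ref{thm:rigid}. Composing with a local inverse of the quasiregular map $v^1$, the third component solves the degenerate elliptic inclusion $(V^3_w,V^3_{\bar w})\in\{(2\eta,-\eta^2):\eta\in\mb S^1\}$. To this \cite[Theorem~1.3]{Lamy2024} applies, and an explicit integration then identifies $\D v$. Neither the polyaffine description of $K$ nor any rigidity statement for $\D v\in K$ appears in your proposal.
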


Observe that, differently from the convex setting, in the polyconvex case uniqueness of minimizers is, in general, false \cite{Lawson1977,Spadaro2009}.\ Those works provide examples of smooth, global minimizers to polyconvex energies, while here $u$ is the \emph{unique} global minimizer of the energy, thus showing that the singular behaviour of the map at hand is unrelated to the lack of uniqueness.

\medskip

In the rest of the introduction, we describe our proof strategy.\ Let us first note a simple calculation: for a map $v(z)= r^{1+ i \omega} e^{i k \theta}$, we have
\begin{equation}
\label{eq:complexders}
\p_z v = \frac{1 + k + i \omega}{2} r^{i \omega} e^{i (k-1)\theta}, \qquad \p_{\bar z} v = \frac{1-k+i\omega}{2} r^{i \omega} e^{i (k+1)\theta}.
\end{equation}
In particular, $|\D u|^2$ is constant so $u$ is Lipschitz. Moreover, the oscillatory factors $r^{i\omega_j}$ show that $\D u$ has no limit at the origin, thus $u\notin C^1$.\
To see why \eqref{eq:choicek} is natural, we note that it corresponds to the smallest frequencies for which there are many linear relations between different minors of $\D u$.\ To see this, we introduce the following notation: if $A \simeq (a,c)$ and $B \simeq (b,d)$, where $a,b$ ($c,d$) are the conformal (anti-conformal) parts of $A,B$ respectively, let
\begin{equation}\label{eq:wedge}
A\wedge B \equiv a d-bc \in \C,
\end{equation}
which corresponds to the determinant of the complex-valued matrix with rows $(a,c)$ and $(b,d)$.\ Through \eqref{eq:complexders}, we see that the choices \eqref{eq:defu}-\eqref{eq:choicek} lead to the six relations which completely determine the range of $\D u$ (Lemma \ref{lem:zero-set}).\ For example, one of these relations is:
\[
\D u^1 \wedge \D u^2  = \frac{i}{2}r^{3 i} e^{2 i\theta}  = \frac{3+4i}{25} \p_z u^3.
\]
Our approach is precisely to analyse this range, and we set
\begin{equation}
\label{eq:defK}
K\equiv \D u(\mb B^2\backslash \{0\}).
\end{equation}
This set is easily seen to be a torus $\mb T^2$ smoothly embedded in a sphere in $\R^{6\times 2}$ (Lemma \ref{lemma:defK}).

A  na\"ive but natural candidate for the integrand in Theorem \ref{thm:main} is $F=\dist^2_K$, and we refer the reader to \cite{Dolzmann2013}  for the study of minimization problems for the corresponding functional. However, $\dist^2_K$ is not polyconvex (it is not even convex separately in each matrix variable), since $K$ is not convex \cite[Theorem 4]{Dolzmann2000}. Nonetheless, one can still hope to build a strongly polyconvex integrand $F$ which has a well on $K$, i.e.\ $F=\dist^2_K$ near $K$. It is relatively clear what local conditions are required to extend a given integrand, defined on a ball, to a global polyconvex integrand \cite{Kristensen2000}. However, again due to the non-convexity of $K$, in our case local conditions are insufficient and a more global argument is needed. In Section \ref{sec:squared-distance-extension} we provide simple, general conditions under which it is possible to find such an extension for $\dist^2_K$, and we verify that they hold in our case.

Finally, to prove that $u$ is the unique global minimizer we need a separate argument concerning solutions of the differential inclusion $\D v \in K$, and we prove:

\begin{theorem}\label{thm:rigid}
Let $\Omega\subset \C$ be connected and let $v\in W^{1,\infty}_\loc(\Omega,\C^3)$ solve $\D v\in K$ a.e.\ in $\Omega$. Then either $v$ is affine or else there is $\lambda \in \mb S^1,z_0\in \C$ such that $\D v = \D(S_\lambda u(\cdot-z_0))$, where
$$S_\lambda (u^1,u^2,u^3) \equiv (\lambda u^1, \lambda^2 u^2, \lambda^3 u^3).$$
\end{theorem}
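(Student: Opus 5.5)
The plan is to first make $K$ explicit, then prove that $\D v$ is regular, and finally derive and solve a first-order PDE for the phases. By \eqref{eq:complexders} and \eqref{eq:choicek}, write $c_j\equiv \frac{1+k_j+i\omega_j}{2}$ and $d_j\equiv\frac{1-k_j+i\omega_j}{2}$. Then every element of $K$ has the form
$$A(\alpha,\gamma)=\bigl((c_1\alpha,\,d_1\alpha\gamma),\,(c_2\alpha^2,\,d_2\alpha^2\gamma),\,(c_3\alpha^3\gamma,\,d_3\alpha^3\gamma^2)\bigr),\qquad \alpha,\gamma\in\mb S^1,$$
with $\alpha=r^{i}$ and $\gamma=e^{2i\theta}$; this is the torus of Lemma \ref{lemma:defK}. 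A solution of $\D v\in K$ is therefore the same as a pair of measurable phases $\alpha,\gamma\colon\Omega\to\mb S^1$ with $\D v=A(\alpha,\gamma)$ a.e. The proof has three steps: (i) show that $\alpha,\gamma\in W^{1,2}_\loc$, so that $\D v\in W^{1,2}_\loc$; (ii) write the compatibility conditions $\p_{\bar z}\p_z v^j=\p_z\p_{\bar z}v^j$ as a first-order system for $(\alpha,\gamma)$; (iii) classify the solutions of that system.

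\textbf{Step (i).} Here I would follow the Šverák-type difference-quotient argument for inclusions into smooth compact sets. The key algebraic input is a null Lagrangian $Q$ on $\R^{6\times 2}$, i.e.\ a linear combination of $2\times 2$ minors, satisfying
$$Q(A-B)\geq c\,|A-B|^2\qquad \text{for all } A,B\in K.$$
I would look for $Q$ among combinations of the real determinants $\det \D v^j=\frac{1}{4}\bigl(|\p_z v^j|^2-|\p_{\bar z}v^j|^2\bigr)$ of the three components. Since $|c_j|>|d_j|$ for every $j$, each component is a uniformly quasiregular-type map: for instance $|d_1/c_1|=1/\sqrt5$. This makes $\sum_j\det$ plausible as a choice, possibly after adding complex minors $\Re(\mu\, A^i\wedge A^j)$ to control the cross terms. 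Checking this inequality on $K\times K$ means checking it on pairs $(\alpha,\gamma),(\alpha',\gamma')$ on the torus. In particular this rules out rank-one connections in $K$. Granted the inequality, apply it to $A=\D v(x+h)$ and $B=\D v(x)$ and integrate against a cutoff. The null-Lagrangian structure lets us move derivatives onto the cutoff, which gives the bound $\|\Delta_h \D v\|_{L^2}\lesssim |h|$ locally. Hence $\D v\in W^{1,2}_\loc$, and then $\alpha=\p_z v^1/c_1$ and $\gamma$ lie in $W^{1,2}_\loc$ with values in $\mb S^1$.

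\textbf{Step (ii).} With this regularity the chain rule applies. Set $a=\log\alpha$ and $g=\log\gamma$ locally; these are purely imaginary $W^{1,2}$ functions, and the $\mb S^1$ constraint can be handled by lifting on simply connected subdomains. The three compatibility conditions become
$$c_1\,\p_{\bar z}a=d_1\gamma\,(\p_z a+\p_z g),\qquad 2c_2\,\p_{\bar z}a=d_2\gamma\,(2\p_z a+\p_z g),$$
$$c_3\bigl(3\p_{\bar z}a+\p_{\bar z}g\bigr)=d_3\gamma\,(3\p_z a+2\p_z g).$$
This is an overdetermined linear system in $(\p_z a,\p_{\bar z}a,\p_z g,\p_{\bar z}g)$. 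Using $\overline{\p_z a}=-\p_{\bar z}a$ and the analogous identity for $g$ (both follow from $a,g\in i\R$), I expect the system to force $\p_z g$ and $\p_z a$ to be explicit multiples of a single quantity $\phi$. For instance, the first two equations should yield $\p_z g=\lambda\,\p_z a$ with an explicit constant $\lambda$. Comparing with $u$ suggests $\phi=\frac{1}{2(z-z_0)}$.

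\textbf{Step (iii).} Substituting back, I expect $\phi$ to satisfy a Riccati-type equation, schematically $\p_{\bar z}\phi=-|\phi|^2$ together with a reality condition. I would show that its only $L^2_\loc$-compatible solutions are $\phi\equiv0$, which gives $\alpha,\gamma$ constant and $v$ affine, or $\phi=\frac{1}{2(z-z_0)}$. In the second case, integrating gives $\gamma=\lambda^2 e^{2i\arg(z-z_0)}$ and $\alpha=\lambda\,|z-z_0|^{i}$, up to the constant phases permitted by the $S_\lambda$ symmetry. Connectedness of $\Omega$ then propagates the alternative globally, since $W^{1,2}$ phases cannot switch branches. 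The main obstacle is Step (i): finding the quadratic null Lagrangian that is coercive on differences of elements of $K$. If a single $Q$ does not work globally on $K\times K$, I would first prove the inequality for nearby pairs, obtain local regularity where $\D v$ has small oscillation, and treat the remaining pairs separately using Lemma \ref{lem:zero-set}.
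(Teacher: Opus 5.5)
Your Step (i) is the gap, and it cannot work as stated. No quadratic null Lagrangian $Q$ can satisfy $Q(A-B)\geq c|A-B|^2$ for all $A,B\in K$.

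The difference-quotient argument you sketch is correct. So if such a $Q$ existed, it would apply to $u$ itself on $\mb B^2$ and give $\D u\in W^{1,2}_\loc(\mb B^2)$. But $\p_{\bar z}\p_z u^1=c_1\p_{\bar z}(r^{i})=\frac{i c_1}{2\bar z}r^{i}$ has modulus $\frac{|c_1|}{2r}$, which is not square integrable at $0$.

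Your own Step (iii) shows the same inconsistency. The nontrivial solution $\phi=\frac{1}{2(z-z_0)}$ is not in $L^2$ near $z_0$. So Step (i) would rule out exactly the singular solutions with $z_0\in\Omega$, which is the case needed for uniqueness in Theorem \ref{thm:main}.

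Your unmodified candidate $\sum_j\det$ even fails infinitesimally. Differentiating $A(\alpha,\gamma)$ in $\gamma$ at $\alpha=\gamma=1$ gives blocks $(0,id_1)$, $(0,id_2)$, $(ic_3,2id_3)$, and all three have negative determinant.

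It is true that $K$ has no rank-one connections, but that does not help. Your fallback (coercivity only for nearby pairs, then regularity where $\D v$ has small oscillation) does not close the argument either. A priori $\D v$ is only measurable, so you have no region where its oscillation is small. Smooth compact sets without rank-one connections can carry nowhere-$C^1$ Lipschitz solutions \cite{Muller2003}. The true regularity holds only away from a discrete set, and proving it is the heart of the theorem.

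The structural idea you are missing is the one the paper uses. Since $|c_1|^2-|d_1|^2=1$, $v^1$ is quasiregular with Jacobian $\equiv 1$, so it is locally invertible off a discrete branch set. Composing with $f=(v^1)^{-1}$, the third component $V^3=v^3\circ f$ satisfies $\D V^3\in\Gamma=\{(2\eta,-\eta^2):\eta\in\mb S^1\}$. This curve has rank-one tangents and is only degenerately elliptic, with $\det(A-B)\geq c|A-B|^4$ (a quartic, not quadratic, lower bound). The result of \cite{Lamy2024} then gives $V^3\in W^{2,\infty}$ off a discrete set. A Beltrami equation with Lipschitz coefficient for $V^2=v^2\circ f$ then bootstraps the phases to $W^{1,p}_\loc$ there.

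Once that regularity is available, your Steps (ii)--(iii) essentially work, even in the original variables. Write $a=i\rho$ and $g=i\sigma$. The first two compatibility equations give $\p_z g=-2i\,\p_z a$ and $\p_{\bar z}a=\gamma\,\p_z a$. Hence $h=\rho+\frac i2\sigma$ is holomorphic and $(e^h)'$ is real, hence a real constant, which is the paper's endgame.

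Two smaller corrections:
\begin{itemize}
\item The ``Riccati'' equation you wrote is not satisfied by $\phi=\frac{1}{2(z-z_0)}$, since that $\phi$ is holomorphic.
\item The singular solutions have $\gamma=e^{2i\arg(z-z_0)}$, with no $\lambda^2$ factor; only $\alpha$ carries the phase $\lambda$.
\end{itemize}
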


To prove Theorem \ref{thm:rigid} we find, after an appropriate quasiconformal change of variables, that the last component of solutions to $\D v\in K$ solves a degenerate elliptic differential inclusion in the sense of \cite{Lamy2024}. We can then apply their results, see also \cite{Lamy2026} for the completely general version, to conclude.

{\small
\bigskip
\textbf{Acknowledgements.}
 AG acknowledges the support of the Royal Society through a Newton International Fellowship. 
 RT was funded by the Italian Ministry for University and Research (MUR), through FIS3 Starting Grant “GEMS” CUP: D53C25002540001 (Finanziata con il contributo del Ministero dell’Università e della ricerca ai sensi del D.D. n. 1802 del 21-11-2024 - BANDO FIS 3, Grant number FIS-2024-02219). 

\bigskip
\textbf{AI disclosure.}
ChatGPT 5.5-Plus was used for finding the complete system of quadratic polyaffine relations determining the set $K$, in Lemma \ref{lem:zero-set}, and for verifying that Theorem \ref{thm:rigid} holds for smooth solutions.\ ChatGPT 5.5-Plus and Claude Opus 5.5 were used to revise the manuscript.
}

\section{The range of the differential of the map}
\label{sec:range}

This section is devoted to analyze the set $K \subset \R^{6 \times 2}$ introduced in \eqref{eq:defK}. To this end, we start by introducing some useful notation, Subsection \ref{subsec:not}.\ Next, in Subsection \ref{subsec:poly}, we write the system of polyaffine relations mentioned in the introduction, and we study its nondegeneracy in the last Subsection \ref{subsec:nondegpoly}.\ These results will be instrumental for the rest of the proof.\

\subsection{Notation}\label{subsec:not}

It is convenient to identify $\R^2\cong \C$ and $\R^{6\times 2}\cong \C^{3\times 2}$ using complex coordinates.\ If
\[
M = \left[\begin{array}{cc} a & b \\ c & d\end{array}\right],
\]
its conformal and anti-conformal parts are, respectively:
\[
[M]_{\mathcal{H}} \equiv  \frac{1}{2}[(a + d) + i(c-b)] \quad \text{ and } \quad [M]_{\overline{\mathcal{H}}} \equiv  \frac{1}{2}[(a - d) + i(c+ b)].
\]
Direct computations show that
\begin{equation}\label{eq:alg}
	\det(M) = |[M]_{\mathcal{H}}|^2 - |[M]_{\overline{\mathcal{H}}}|^2, \quad |M|^2 = 2|[M]_{\mathcal{H}}|^2 + 2|[M]_{\overline{\mathcal{H}}}|^2.
\end{equation}

Therefore, letting $A \in \R^{6\times 2}$ and subdividing it in three $2\times 2$ blocks $A_1,A_2,A_3 \in \R^{2\times 2}$ which contain the first two, second two and last two rows respectively, we write
\begin{equation}
	\label{eq:defA}
	A = \begin{bmatrix}
		A_1\\
		A_2\\
		A_3
	\end{bmatrix} \cong
	\begin{bmatrix}
		[A_1]_{\mathcal{H}} & [A_1]_{\overline{\mathcal{{H}}}}\\
		[A_2]_{\mathcal{H}} & [A_2]_{\overline{\mathcal{{H}}}}\\
		[A_3]_{\mathcal{H}} & [A_3]_{\overline{\mathcal{{H}}}}
	\end{bmatrix} = \begin{bmatrix}
	a_1 & b_1\\
	a_2 & b_2\\
	a_3 & b_3
	\end{bmatrix}
	\in\C^{3\times2}.
\end{equation}
By defining the Wirtinger derivatives of a differentiable map $f: \C \to \C$ as $\partial_z f := [Df]_{\mathcal{H}}$ and $\partial_{\bar z}f := [Df]_{\overline{\mathcal{H}}}$, \eqref{eq:defA} yields:

$$
\D u \cong 
\begin{bmatrix}
\p_z u^1 & \p_{\bar z} u^1\\
\p_z u^2 & \p_{\bar z} u^2\\
\p_z u^3 & \p_{\bar z} u^3\\
\end{bmatrix}
$$
The next lemma follows by a straightforward calculation from \eqref{eq:complexders}, which we omit.

\begin{lemma}\label{lemma:defK}
Consider the constants
\[
c_1=1+\frac i2,\quad d_1=\frac i2,\quad
c_2=1+i,\quad d_2=i,\quad 
c_3=\frac{4+3i}{2},\quad
d_3=\frac{-2+3i}{2}.
\]
Then, in the notation \eqref{eq:defK}, we have
$$K = \varphi(\mb T^2), \qquad 
\varphi(\xi,\eta)\equiv 
\begin{bmatrix}
c_1 \xi & d_1 \eta \\
c_2 \xi^2 & d_2 \xi \eta \\
c_3 \xi^2 \eta & d_3 \xi \eta^2
\end{bmatrix}.
$$
Moreover $\varphi\colon \T^2 \to \C^{3\times 2}$ is an embedding, thus $K$ is a smooth, embedded torus in $\C^{3\times 2}$.
\end{lemma}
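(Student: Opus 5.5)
Plugging \eqref{eq:choicek} into \eqref{eq:complexders} gives, for $z=re^{i\theta}\neq 0$,
\begin{align*}
\p_z u^1 &= \tfrac{2+i}{2} r^{i}, & \p_{\bar z} u^1 &= \tfrac{i}{2} r^{i} e^{2i\theta},\\
\p_z u^2 &= (1+i) r^{2i}, & \p_{\bar z} u^2 &= i\, r^{2i} e^{2i\theta},\\
\p_z u^3 &= \tfrac{4+3i}{2} r^{3i} e^{2i\theta}, & \p_{\bar z} u^3 &= \tfrac{-2+3i}{2} r^{3i} e^{4i\theta}.
\end{align*}
The constants are exactly $c_j,d_j$. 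I then set $\xi\equiv r^{i}=e^{i\log r}$ and $\eta\equiv r^{i}e^{2i\theta}$, so that $\xi^2=r^{2i}$, $\xi\eta=r^{2i}e^{2i\theta}$, $\xi^2\eta=r^{3i}e^{2i\theta}$ and $\xi\eta^2=r^{3i}e^{4i\theta}$. With these substitutions, $\D u(z)=\varphi(\xi,\eta)$.

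It remains to show that the map $\Phi(z)\equiv(r^i,\,r^ie^{2i\theta})$ takes $\mb B^2\setminus\{0\}$ onto $\T^2$. Write $\xi=e^{is}$ and $\eta=e^{it}$ with $s=\log r$ ranging over $(-\infty,0)$ and $t=s+2\theta$. Since $s$ covers a full period modulo $2\pi$, we can first choose $s$ to realize any given $\xi$. Then $\theta$ ranges over all of $\R$ modulo $2\pi$, so $t$ covers everything modulo $2\pi$ and any $\eta$ is attained. Hence $K=\varphi(\T^2)$.

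For the embedding claim, $\T^2$ is compact, so it suffices to show that $\varphi$ is an injective immersion. Injectivity follows because $c_1,d_1\neq 0$, so $(\xi,\eta)$ is recovered from the first row as $(a_1/c_1,\,b_1/d_1)$. For the immersion property, look at the first row alone: the map $(\xi,\eta)\mapsto(c_1\xi,d_1\eta)$ is the product of two nonzero dilations of circles, whose differential already has rank $2$. This is therefore an embedding, and $K$ is a smooth embedded torus.

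None of these steps is a real obstacle. The only care needed is in the arithmetic for the Wirtinger constants and in checking that the reparametrization $\Phi$ is surjective.
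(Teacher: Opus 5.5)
Your proposal is correct and is exactly the straightforward computation from \eqref{eq:complexders} that the paper omits. The Wirtinger derivatives produce precisely the constants $c_j,d_j$, and the substitution $\xi=r^{i}$, $\eta=r^{i}e^{2i\theta}$ with $\log r$ sweeping a full period gives $K=\varphi(\T^2)$; the first row $(c_1\xi,d_1\eta)$ serves as a smooth left inverse, which yields injectivity and the immersion property, hence an embedding by compactness. One cosmetic point: do not call your reparametrization $\Phi$, since the paper reserves that symbol for the vector of polyaffine relations.
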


Let us note a simple invariance of $K$. For $\lambda,\mu\in\mb S^1$, define the  map $\mc T_{\lambda,\mu}\colon \C^{3\times 2}\to \C^{3\times 2}$:
\begin{equation}
\label{eq:torus-action}
\mathcal T_{\lambda,\mu}(A)\equiv
\begin{bmatrix}
\lambda a_1&\mu b_1\\
\lambda^2a_2&\lambda\mu b_2\\
\lambda^2\mu a_3&\lambda\mu^2b_3
\end{bmatrix},
\end{equation}
which  is real-linear orthogonal. Directly from the definition of $\varphi$, we have
\begin{equation}
\label{eq:torus-equivariance}
\mathcal T_{\lambda,\mu}\bigl(\varphi(\xi,\eta)\bigr)
=\varphi(\lambda\xi,\mu\eta).
\end{equation}
In particular, $\mathcal T_{\lambda,\mu}(K)=K$.

\subsection{The polyaffine relations}\label{subsec:poly}

The goal of this subsection is to show that $K$ can be characterized through a system of polyaffine relations.\ We say that $F\colon \R^{m\times n}\to \R$ is polyaffine if $\pm F$ are polyconvex.\ There are several equivalent definitions of polyaffinity, that we collect in the following:
\begin{lemma}\label{lem:polyaff}
	$F\colon \R^{m\times n}\to \R$ is polyaffine if and only if $F$ is an affine combination of minors.\ If $F$ is a quadratic form, then $F$ is polyaffine if and only if $F(R) = 0$ for every $R \in \R^{m\times n}$ with rank one.
\end{lemma}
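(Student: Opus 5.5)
The plan is to prove the two characterizations separately, using classical facts about minors and rank-one convexity.

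\emph{First claim.} One direction is immediate. Each minor $M_I(A)$ is a linear coordinate of the vector $M(A)$, so $\pm M_I(A)=f(M(A))$ with $f$ affine. Hence any affine combination of minors is polyaffine.

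For the converse, let $F$ be polyaffine. Then $F$ and $-F$ are both polyconvex, hence both rank-one convex. So $F$ is rank-one affine: $t\mapsto F(A+tR)$ is affine for every $A$ and every rank-one $R$. Such an $F$ is separately affine in each entry, so it is a polynomial of degree at most one in each variable. I will then use the classical characterization of rank-one affine functions (Ball--Currie--Olver; see Dacorogna's book, the result on quasiaffine functions). This states that a continuous rank-one affine function is an affine combination of minors.

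To keep the argument self-contained, I would sketch the induction on degree. Consider the top-degree homogeneous part $P_k$ of $F$. Rank-one affinity forces $\partial^2_{a_{i\alpha}a_{j\beta}}P_k + \partial^2_{a_{i\beta}a_{j\alpha}}P_k=0$, which comes from the second directional derivative along $R=e_i\otimes e_\alpha+e_j\otimes e_\beta$ type combinations. This antisymmetry in both row and column indices forces each monomial coefficient to be alternating. Therefore $P_k$ is a combination of $k\times k$ minors, and we subtract it and induct. I expect this step to be the main technical point, but it is standard.

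\emph{Second claim.} Let $F$ be a quadratic form, $F(A)=\langle QA,A\rangle$. If $F$ is polyaffine, then by the first part $F$ is a combination of minors. Homogeneity of degree two forces $F$ to be a linear combination of $2\times2$ minors only. Every $2\times 2$ minor vanishes on rank-one matrices, so $F(R)=0$ for every rank-one $R$.

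Conversely, suppose $F(R)=0$ for all rank-one $R$. Expanding $F(A+tR)=F(A)+2t\langle QA,R\rangle+t^2F(R)$ shows that $t\mapsto F(A+tR)$ is affine, so $F$ is rank-one affine. The first part then makes $F$ an affine combination of minors, i.e.\ polyaffine.

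A direct argument for this converse is also available. Write $F(a\otimes b)=\sum Q_{i\alpha j\beta}a_ia_jb_\alpha b_\beta\equiv0$. This says the symmetric-in-pairs tensor $Q$, symmetrized in $(i,j)$ and in $(\alpha,\beta)$, vanishes. Hence $Q$ is antisymmetric under $i\leftrightarrow j$ with $\alpha,\beta$ held fixed, up to the overall pair symmetry. That means $F$ is a combination of $a_{i\alpha}a_{j\beta}-a_{i\beta}a_{j\alpha}$, which are exactly the $2\times2$ minors.
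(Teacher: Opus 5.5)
Your proposal is correct and takes essentially the same route as the paper. The paper simply cites Dacorogna's characterization of polyaffine (equivalently rank-one affine) functions as affine combinations of minors, and a direct argument (adapted from Guerra's Lemma 4.2) for quadratic forms; your reduction through rank-one affinity and your symmetrization argument for the quadratic case do the same thing.

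One small fix to your sketch: $e_i\otimes e_\alpha+e_j\otimes e_\beta$ is not rank one. The identity $\partial^2_{a_{i\alpha}a_{j\beta}}P_k+\partial^2_{a_{i\beta}a_{j\alpha}}P_k=0$, for all indices including coincident ones, should instead be read off as the coefficient of $a_ia_jb_\alpha b_\beta$ in the polynomial identity $D^2P_k(A)[a\otimes b,a\otimes b]\equiv0$.
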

\begin{proof}
	The general equivalence can be found in \cite{Dacorogna2007}, while the characterization of quadratic $F$ can be deduced following the same proof as \cite[Lemma 4.2]{Guerra2018}.
\end{proof}

\medskip

 Operation \eqref{eq:wedge} allows to define all the polyaffine relations we require: setting $u^\ell = \overline{u^{\ell - 3}}$ for $\ell = 4,5,6$, we claim that these polyaffine relations are given by
\begin{equation}\label{eq:ab}
Du^a\wedge Du^b = c^1_{ab} + c^2_{ab}\partial_{z}u^{r} +c^3_{ab}\partial_{\bar z} u^s \text{ a.e.}, \quad \text{for some }r(a,b),s(a,b) \in \{1,\dots,6\},c^k_{ab} \in \C,
\end{equation}
for suitable choices of $a$ and $b$.\ There are $36$ such wedge products, but, due to the anti-symmetry of the operation, only $15$ give nontrivial information, and we will only need 6 of them.\ To simplify the notation, set:
\[
J_j(A)\equiv \det A_j \overset{\eqref{eq:alg}}{=} |a_j|^2-|b_j|^2,\quad 
H_{jk}(A)\equiv A_j\wedge A_k = a_jb_k-b_ja_k,\quad 
L_{jk}(A)\equiv a_k\overline{a_j}-b_k\overline{b_j}.
\]
While $H_{jk}$ is $A_j \wedge A_k$ and hence, once evaluated on $\D u$, it is the left-hand side of \eqref{eq:ab} for $a,b \in \{1,2,3\}$, $L_{jk}$ corresponds to considering a wedge product $\D u^a \wedge \D u^b$ for $a \in \{1,2,3\}$, $b \in \{4,5,6\}$.\ Finally, notice that $J_j(A) = L_{jj}(A)$.\ $J_j$, $H_{jk}$ and $L_{jk}$ all vanish on rank-one matrices in $\R^{6\times 2}$, hence they are polyaffine.\ Thus, we define the vector of the minor relations we will need:
\[
        \Phi=(\Phi_1,\Phi_3,\Phi_H,\Phi_{12},\Phi_{13},\Phi_{23})\colon \C^{3\times 2}\to \R^2 \times \C^4\cong \R^{10},
\]
through
\begin{gather*}
\Phi_1=J_1-1,\qquad
\Phi_3=J_3-3,\qquad
\Phi_H=H_{12}-\frac{3+4i}{25}a_3,\\
\Phi_{12}=L_{12}-a_1,\qquad 
\Phi_{13}=L_{13}+2i b_2,\qquad
\Phi_{23}=L_{23}+(3+4i)b_1 .
\end{gather*}

\begin{lemma}\label{lem:zero-set}
The above relations characterize $K$, i.e.\
$ \Phi^{-1}(0)=K.$
\end{lemma}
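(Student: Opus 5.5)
We prove the two inclusions separately. The inclusion $K\subset\Phi^{-1}(0)$ is a direct check on the parametrization $\varphi$ of Lemma \ref{lemma:defK}. Since each $\Phi_\bullet$ is equivariant under $\mathcal T_{\lambda,\mu}$ (all terms in each relation carry the same character of $(\lambda,\mu)$), by \eqref{eq:torus-equivariance} it suffices to verify the identities at $\xi=\eta=1$. There they reduce to identities among the constants:
\[
|c_1|^2-|d_1|^2=1,\qquad |c_3|^2-|d_3|^2=3,\qquad c_1d_2-d_1c_2=\tfrac{3+4i}{25}c_3,
\]
together with the three analogous identities for $L_{12},L_{13},L_{23}$. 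For instance, $\tfrac54-\tfrac14=1$ and $\tfrac{25}{4}-\tfrac{13}{4}=3$.

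For the converse, let $A\cong(a_j,b_j)$ satisfy $\Phi(A)=0$. The plan is to show that $|a_1|=|c_1|$ and $|b_1|=|d_1|$, and then to show that the remaining entries are uniquely determined by $(a_1,b_1)$.

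First, by $\Phi_1=0$ we have $|a_1|^2=1+|b_1|^2>0$. We then solve for the other rows given $(a_1,b_1)$. The relations $\Phi_{12}=0$ and $\Phi_H=0$ form a real-linear system in $(a_2,b_2)$ with $a_3$ as a parameter:
\[
\overline{a_1}\,a_2-\overline{b_1}\,b_2=a_1,\qquad a_1b_2-b_1a_2=\tfrac{3+4i}{25}a_3.
\]
The relation $\Phi_{13}=0$ gives $\overline{a_1}a_3-\overline{b_1}b_3=-2ib_2$. The coefficient matrix of the first two equations in $(a_2,b_2)$ has determinant $|a_1|^2-|b_1|^2=1\neq0$, as does the analogous pairing in $(a_3,b_3)$. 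We can therefore solve these equations successively: $(a_2,b_2)$ is expressed linearly in terms of $a_1$ and $a_3$, and $(a_3,b_3)$ linearly in terms of $b_2$. This produces a closed linear system in the unknowns of rows $2,3$, with coefficients polynomial in $a_1,b_1,\overline{a_1},\overline{b_1}$.

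We then use the torus action \eqref{eq:torus-action} to normalize $a_1$ and $b_1$ to be positive multiples of $c_1$ and $d_1$: we write $a_1=\alpha c_1\xi$ and $b_1=\beta d_1\eta$ with $\xi,\eta\in\mb S^1$ and $\alpha,\beta\ge0$, and apply $\mathcal T_{\xi,\eta}^{-1}$, which preserves $\Phi^{-1}(0)$. After this normalization we are left with a single real parameter $s=|b_1|^2$, since $\Phi_1$ fixes $|a_1|^2=1+s$. Solving the linear system above expresses rows $2,3$ as rational functions of $s$. Substituting into the two remaining relations, $\Phi_{23}=0$ and $\Phi_3=0$, yields polynomial equations in $s$.

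The main obstacle is to show that these equations force $s=\tfrac14$, i.e. that no spurious solutions exist. This includes the degenerate case $b_1=0$, where the phase $\eta$ is undefined and has to be handled separately. I would first try to derive from $\Phi_{23}$ alone an equation that is polynomial in $s$ and linear in the rows already solved for, and then check that its only nonnegative common root with $\Phi_3$ is $s=\tfrac14$. At $s=\tfrac14$ the linear system is nondegenerate (determinant $1$), so its unique solution is $\varphi(1,1)$, and undoing the normalization gives $A=\varphi(\xi,\eta)\in K$.
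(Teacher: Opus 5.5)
Your first inclusion is fine: the equivariance check is correct, and so are the constant identities, e.g.\ $c_1d_2-d_1c_2=\frac{3+4i}{25}c_3=\frac i2$. The converse, however, rests on a miscount.

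Given row $1$, the only relations that are linear in rows $2,3$ are $\Phi_{12}$, $\Phi_H$ and $\Phi_{13}$. That is three complex equations in the four complex unknowns $a_2,b_2,a_3,b_3$. There is no ``analogous pairing in $(a_3,b_3)$'': that would require $H_{13}$, which is not a component of $\Phi$. Indeed $\Phi_{13}$ is a single complex equation for $(a_3,b_3)$, and $\Phi_{23}$ is sesquilinear in rows $2,3$, not linear. Explicitly, you get $a_2=a_1^2+\overline{b_1}\frac{3+4i}{25}a_3$ and $b_2=a_1b_1+\overline{a_1}\frac{3+4i}{25}a_3$, and then $b_3$ affinely in $a_3$ when $b_1\neq0$. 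So $a_3$ (equivalently $b_2$, or $\eta_2$ in the paper's notation) remains a \emph{free complex parameter}.

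The real dimension count confirms this. Start from $12$ unknowns, subtract $2$ for the torus normalization, $1$ for $\Phi_1$, and $6$ for the three complex linear relations. This leaves $3$ real unknowns, say $(s,\Re b_2,\Im b_2)$, constrained by the $3$ real equations $\Phi_{23}=0$ (two) and $\Phi_3=0$ (one). Hence rows $2,3$ are not rational functions of $s$, and you do not get polynomial equations in $s$ alone. Your closing step fails for the same reason. The determinant $|a_1|^2-|b_1|^2=1$ holds for every $s$, not specially at $s=\frac14$. Even at $s=\frac14$ the linear relations leave a complex line of solutions, so they do not single out $\varphi(1,1)$.

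The real content of the lemma is showing that this $3\times3$ nonlinear real system has only the solution $s=\frac14$, $\eta_2=1$. Your proposal does not address this, and it is not a one-variable root problem. The paper handles it as follows:
\begin{enumerate}
\item Express $\xi_2,\xi_3,\eta_3$ in terms of $\xi_1,\eta_1,\eta_2$, and rule out $\eta_1=0$ using $L_{13}$ and $J_3$.
\item Write $\eta_2=x+iy$ and treat $n=|\eta_2|^2$ as an independent variable. The real and imaginary parts of $L_{23}$ are then \emph{linear} in $(x,y,n)$, giving $n=\eta_1^2$ and $\xi_1(2x+y)=2\eta_1$.
\item With these, $J_3$ becomes linear in $x$ and yields $\eta_1=x\xi_1$.
\item Only now use the constraint $n=x^2+y^2$, together with positivity. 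This forces $y=0$, $x=\eta_1$ and $\xi_1=\eta_1=1$.
\end{enumerate}
You need some argument of this kind, or an explicit elimination of the three-variable system, to close the proof.
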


\begin{proof}
Let $A$ be as in \eqref{eq:defA}.
Since all $c_j$ and $d_j$ are non-zero, let
\[
        a_j=c_j\xi_j,\qquad b_j=d_j\eta_j,\qquad j=1,2,3.
\]
The equations $\Phi(A)=0$ are equivalent to
\begin{gather}
\label{eq:J}
\tag{$J$} 5|\xi_1|^2-|\eta_1|^2=4,\qquad
25|\xi_3|^2-13|\eta_3|^2=12,\\
\label{eq:H12}
\tag{$H_{12}$} \xi_3=(-1-i)\eta_1\xi_2+(2+i)\xi_1\eta_2,\\
\label{eq:L12}
\tag{$L_{12}$} (3+i)\overline{\xi_1} \xi_2-\overline{\eta_1} \eta_2=(2+i)\xi_1,\\
\label{eq:L13}
\tag{$L_{13}$} (11+2i)\overline{\xi_1} \xi_3-(3+2i)\overline{\eta_1} \eta_3=8\eta_2,\\
\label{eq:L23}
\tag{$L_{23}$} (7-i)\overline{\xi_2} \xi_3-(3+2i)\overline{\eta_2} \eta_3=(4-3i)\eta_1.
\end{gather}
It is easy to see that the system \eqref{eq:J}--\eqref{eq:L23} is equivariant under the action \eqref{eq:torus-action}.\ Furthermore, $\xi_\ell = 1, \eta_\ell = 1$ for all $\ell \in \{1,2,3\}$ solve system \eqref{eq:J}--\eqref{eq:L23}, and hence $K\subset \Phi^{-1}(0)$, since $K = \{\mathcal{T}_{\lambda,\mu}(\varphi(1,1)): \lambda,\mu \in \mathbb{S}^1\}$, using the notation of Lemma \ref{lemma:defK} and the subsequent definitions.\ We show the converse: to this end, let $A\in\Phi^{-1}(0)$.\ Our goal is to show that $A \in K$.

\medskip

The first $J$-equation gives $|\xi_1|^2\ge 4/5$.\ Using the equivariance of the system under the action \eqref{eq:torus-action}, we may assume $\xi_1 >0, \eta_1 \ge 0$, so that our goal becomes $A = \varphi(1,1)$.\ \eqref{eq:L12} gives
\begin{equation}\label{e:xi2}
\xi_2=\frac{(2+i)\xi_1 + \eta_1 \eta_2}{(3+i)\xi_1} = \frac{2+i}{3 + i} +\frac{\eta_1}{(3+i)\xi_1} \eta_2.
\end{equation}
This can be substituted inside \eqref{eq:H12} to get, after using the first $J$-equation:
\begin{equation}\label{e:xi3}
	\begin{split}
		\xi_3= \frac{-(1+3i)\eta_1\xi_1 + (4+4i)\eta_2}{(3+i)\xi_1} =  -\frac{1+3i}{3+i}\eta_1 +  \frac{4+4i}{(3+i)\xi_1}\eta_2
	\end{split}	
\end{equation}
Now from \eqref{eq:L13} we obtain
\begin{equation}\label{e:eta3pre}
(3+2i)\eta_1 \eta_3=(11+2i)\xi_1 \xi_3-8\eta_2 \overset{\eqref{e:xi3}}{=} -5\frac{1+7i}{3+i}\xi_1\eta_1 + 4\frac{3+11i}{3+i}\eta_2.
\end{equation}
If we had $\eta_1 = 0$, then \eqref{e:eta3pre} would give us $\eta_2 = 0$, and from \eqref{e:xi3} we would deduce $\xi_3 = 0$, in contradiction with the second $J$-equation.\ Hence, from now on we can assume
\begin{equation}\label{e:sign}
\xi_1>0,\qquad \eta_1> 0.
\end{equation}
Thus, we can divide by $(3+2i)\eta_1$ in \eqref{e:eta3pre} to obtain
\begin{equation}\label{e:eta3}
\eta_3=-5\frac{1+7i}{7+9i}\xi_1+ 4\frac{3+11i}{7+9i}\frac{\eta_2}{ \eta_1}.
\end{equation}
Now we need to use the last equation, \eqref{eq:L23}.\ We start by computing

\begin{align*}
	(7-i)\overline{\xi_2} \xi_3 &\overset{\eqref{e:xi2}-\eqref{e:xi3}}{=}-(4+3i)\eta_1 - (1+2i)\frac{\eta_1^2}{\xi_1}\overline{\eta_2} +\frac{4(11+2i)}{5\xi_1}\eta_2 + \frac{4(4+3i)\eta_1}{5\xi_1^2}|\eta_2|^2,\\
	  -(3+2i)\overline{\eta_2} \eta_3 &\overset{\eqref{e:eta3}}{=} 5(1+2i)\xi_1\overline{\eta_2} -4(2+3i)\frac{|\eta_2|^2}{ \eta_1}.
\end{align*}
Exploiting once again the first $J$-equation, we have
\[
(7-i)\overline{\xi_2} \xi_3-(3+2i)\overline{\eta_2} \eta_3 = -(4+3i)\eta_1 + 4(1+2i)\frac{\overline{\eta_2}}{\xi_1} +\frac{4(11+2i)}{5\xi_1}\eta_2 + \frac{4(4+3i)\eta_1}{5\xi_1^2}|\eta_2|^2 -4(2+3i)\frac{|\eta_2|^2}{ \eta_1}.
\]
We make use of \eqref{eq:L23}, where we bring the first addendum of the previous sum on the right-hand side and multiply by $5\xi_1^2\eta_1/4$, to get:
\begin{equation}\label{e:L23mod}
 5(1+2i)\xi_1\eta_1\overline{\eta_2} +(11+2i)\xi_1\eta_1\eta_2 + (4+3i)\eta_1^2|\eta_2|^2 -5(2+3i)\xi_1^2|\eta_2|^2 = 10\eta_1^2\xi_1^2.
\end{equation}
\eqref{e:L23mod} consists of two nonlinear equations in $\eta_2 = x + iy$.\ We can make it linear by considering it as a system in three variables, $x,y$ and $n = |\eta_2|^2$.\ Coupling these with the second $J$-equation, we find an invertible system of three linear equations in three variables: using that $n = x^2 + y^2$ will then give us the required rigidity to conclude the proof.\ We start by taking respectively the real and the imaginary part of \eqref{e:L23mod}:
\begin{align}
	4(2x+y)\xi_1\eta_1 + 2\eta_1^2n -5\xi_1^2n = 5\eta_1^2\xi_1^2,\label{e:L23re}\\
    2(2x+y)\xi_1\eta_1 + \eta_1^2n -5\xi_1^2n = 0\label{e:L23im}
\end{align}
Subtracting \eqref{e:L23re} from twice \eqref{e:L23im}, we find that
\begin{equation}\label{e:n}
n = |\eta_2|^2 = \eta_1^2.
\end{equation}
Using this and the first $J$-equation, after subtracting \eqref{e:L23im} from \eqref{e:L23re} we get 
\begin{equation}\label{e:xy}
	\xi_1(2x+y) = 2\eta_1.
\end{equation}
Finally, we use the second $J$-equation.\ To this end, we compute
\begin{align*}
25|\xi_3|^2 &\overset{\eqref{e:xi3}-\eqref{e:n}}{=} 25\eta_1^2 + 80\frac{\eta_1^2}{\xi_1^2} - 40\frac{\eta_1}{\xi_1}(2x+y) \overset{\eqref{e:xy}}{=} 25\eta_1^2,\\
13|\eta_3|^2 &\overset{\eqref{e:eta3}-\eqref{e:n}}{=} 125\xi_1^2 + 208 - 40(8x+y)\frac{\xi_1}{\eta_1} \overset{\eqref{e:xy}}{=} 125\xi_1^2 + 128 - 240x\frac{\xi_1}{\eta_1}.
\end{align*}
Plugging these in the second $J$-equation and exploiting the first $J$-equation, we can write
\begin{equation}\label{e:finale}
12 = 25|\xi_3|^2-13|\eta_3|^2 = 25(\eta_1^2 -5\xi_1^2) - 128 + 240x\frac{\xi_1}{\eta_1} = -228 + 240x\frac{\xi_1}{\eta_1},
\end{equation}
from which we get $\eta_1 = x\xi_1$, so that $x > 0$ by \eqref{e:sign}.\ From this, \eqref{e:xy}-\eqref{e:sign} yields $y = 0$, and hence $x = \eta_1$ by \eqref{e:n}-\eqref{e:sign}.\ But then \eqref{e:finale} shows $\xi_1 = 1$.\ From the first $J$-equation and $\eqref{e:sign}$, we find also $\eta_1 = 1$, and hence $\eta_2 = 1$.\ Now \eqref{e:xi2}-\eqref{e:xi3}-\eqref{e:eta3} give $\xi_2 = \xi_3 = \eta_3 = 1$ as well, so that $A = \varphi(1,1) \in K$ and we conclude our proof.
\end{proof}

\subsection{Nondegeneracy of the polyaffine relations}\label{subsec:nondegpoly}

\begin{lemma}[Non-degeneracy]\label{lem:transversality}
For every $(\xi,\eta)\in\T^2$ we have $       \ker \Phi'(\varphi(\xi,\eta))=T_{\varphi(\xi,\eta)}K.$
\end{lemma}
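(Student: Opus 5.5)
First I reduce to a single point using the symmetry. Each $\mathcal T_{\lambda,\mu}$ is a real-linear orthogonal map satisfying \eqref{eq:torus-equivariance}, and the system is equivariant under it. More precisely, there is a real-linear isomorphism $S_{\lambda,\mu}$ of $\R^{10}$ with $\Phi\circ\mathcal T_{\lambda,\mu}=S_{\lambda,\mu}\circ\Phi$. For instance, $\Phi_H(\mathcal T A)=\lambda^2\mu\,\Phi_H(A)$ and $\Phi_{12}(\mathcal T A)=\lambda\,\Phi_{12}(A)$, and similarly for the others; this is exactly the equivariance used in the proof of Lemma \ref{lem:zero-set}. Differentiating this identity at $A=\varphi(1,1)$ gives $\ker\Phi'(\varphi(\lambda,\mu))=\mathcal T_{\lambda,\mu}\ker\Phi'(\varphi(1,1))$. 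Also $T_{\varphi(\lambda,\mu)}K=\mathcal T_{\lambda,\mu}T_{\varphi(1,1)}K$. So it suffices to prove the claim at $A_0=\varphi(1,1)$.

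At $A_0$, the inclusion $T_{A_0}K\subset\ker\Phi'(A_0)$ is immediate: $\Phi\circ\varphi\equiv 0$ by Lemma \ref{lem:zero-set}, and $T_{A_0}K$ is two-dimensional because $\varphi$ is an embedding (Lemma \ref{lemma:defK}). It therefore remains to show that $\dim_\R\ker\Phi'(A_0)\le 2$, that is, that the real linear map $\Phi'(A_0)\colon\R^{12}\to\R^{10}$ has rank $10$.

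I will compute the kernel in the coordinates of the proof of Lemma \ref{lem:zero-set}. Write $a_j=c_j\xi_j$ and $b_j=d_j\eta_j$, and linearise the system \eqref{eq:J}--\eqref{eq:L23} at $\xi_j=\eta_j=1$, with variations $\dot\xi_j,\dot\eta_j\in\C$. This yields the following linear equations:
\begin{itemize}
\item from \eqref{eq:J}: $5\Re\dot\xi_1-\Re\dot\eta_1=0$ and $25\Re\dot\xi_3-13\Re\dot\eta_3=0$;
\item from \eqref{eq:H12}: $\dot\xi_3=(-1-i)(\dot\eta_1+\dot\xi_2)+(2+i)(\dot\xi_1+\dot\eta_2)$;
\item from \eqref{eq:L12}: $(3+i)(\overline{\dot\xi_1}+\dot\xi_2)-\overline{\dot\eta_1}-\dot\eta_2=(2+i)\dot\xi_1$;
\item and analogous equations from \eqref{eq:L13} and \eqref{eq:L23}.
\end{itemize}

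I then eliminate variables in the same order as in the proof of Lemma \ref{lem:zero-set}. The linearised \eqref{eq:L12} gives $\dot\xi_2$ in terms of $\dot\xi_1,\dot\eta_1,\dot\eta_2$. Then \eqref{eq:H12} gives $\dot\xi_3$, and \eqref{eq:L13} gives $\dot\eta_3$. This leaves the two real equations from the linearised \eqref{eq:L23}, the two linearised $J$-equations, and the unknowns $\dot\xi_1,\dot\eta_1,\dot\eta_2\in\C$, six real unknowns in total. I need these four real equations to be independent. The tangent directions $\partial_\lambda,\partial_\mu$, namely $\dot\xi=(i,2i,2i)$, $\dot\eta=(0,i,i)$ and $\dot\xi=(0,0,i)$, $\dot\eta=(i,i,2i)$, automatically solve the system. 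So I would use them to normalise $\Im\dot\xi_1=\Im\dot\eta_1=0$, and then check that the remaining $4\times4$ real system in $\Re\dot\xi_1,\Re\dot\eta_1,\Re\dot\eta_2,\Im\dot\eta_2$ has nonzero determinant.

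The main obstacle is exactly this determinant computation. It is routine but error-prone, and there is no conceptual shortcut, since nondegeneracy is a genuine property of the specific frequencies \eqref{eq:choicek}. To organise it, I would mirror the nonlinear proof: the linearisation of \eqref{e:L23re}, \eqref{e:L23im} and of the two $J$-equations, restricted to $\Im\dot\xi_1=\Im\dot\eta_1=0$. The steps giving \eqref{e:n}, \eqref{e:xy} and \eqref{e:finale}, linearised, should successively force $\Re\dot\eta_2=\dot\eta_1$, $2\Re\dot\eta_2+\Im\dot\eta_2=2\dot\eta_1-2\dot\xi_1$, and so on, down to $\dot\xi_1=\dot\eta_1=\dot\eta_2=0$. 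If that chain closes with nonzero pivots, then $\ker\Phi'(A_0)=T_{A_0}K$, as required.
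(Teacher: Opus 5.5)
Your reduction to $A_0=\varphi(1,1)$ via $\Phi\circ\mathcal T_{\lambda,\mu}=S_{\lambda,\mu}\circ\Phi$, and the inclusion $T_{A_0}K\subset\ker\Phi'(A_0)$, match the paper. You depart from it at the rank-$10$ claim. The paper writes out the explicit $10\times 12$ real Jacobian and certifies its rank with computer algebra. You instead propose to get it by hand, by linearising the gauge-fixed elimination from the proof of Lemma~\ref{lem:zero-set}.

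This route works. You should state that the chain closes rather than leave it conditional. Take $\eta_2=x+iy$ and $\Im\dot\xi_1=\Im\dot\eta_1=0$, and linearise \eqref{e:n}, \eqref{e:xy}, \eqref{e:finale} and the first $J$-equation at $\xi_1=\eta_1=x=1$, $y=0$. This gives
\[
\dot x=\dot\eta_1,\qquad 2\dot x+\dot y=2\dot\eta_1-2\dot\xi_1,\qquad \dot x+\dot\xi_1-\dot\eta_1=0,\qquad 5\dot\xi_1=\dot\eta_1 .
\]
These force successively $\dot\xi_1=0$, $\dot\eta_1=0$, $\dot x=0$, $\dot y=0$. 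The linearised \eqref{e:xi2}, \eqref{e:xi3}, \eqref{e:eta3} then give $\dot\xi_2=\dot\xi_3=\dot\eta_3=0$.

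You should also say explicitly why linearising the \emph{derived} equations, rather than the original ones, is legitimate. Every step of that elimination, and of the passage from $\Phi$ to \eqref{eq:J}--\eqref{eq:L23}, is one of the following:
\begin{itemize}
\item a linear change of variables;
\item multiplication of an equation by a nonvanishing factor (a nonzero constant, or e.g.\ $5\xi_1^2\eta_1/4$);
\item solving for a variable with a nonvanishing coefficient;
\item splitting into real and imaginary parts;
\item adding smooth multiples of equations already in the system (notably the first $J$-equation).
\end{itemize}
At a common zero these operations change the Jacobian only by invertible factors, so its kernel is unchanged. Moreover, the ten gauge-fixed equations in ten real unknowns correspond one-to-one to the original ten.

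Your route gives a short, human-checkable certificate. It also explains \emph{why} nondegeneracy holds: after fixing the torus gauge, the uniqueness argument of Lemma~\ref{lem:zero-set} is a triangular elimination with nonvanishing pivots, i.e.\ an implicit-function-type argument. The price is that it relies on every algebraic identity in that earlier proof; these do check out. The paper's route is purely mechanical and independent of any particular elimination order, but it delegates the verification to software.
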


\begin{proof}
As $K = \{\varphi(\xi,\eta): \xi,\eta \in \mathbb S^1\}$, see Lemma \ref{lemma:defK}, we can obtain a basis for its tangent plane by differentiating the parametrization:
\begin{equation*}
 \frac{\d}{\d t}\bigg|_{t=0}\varphi(e^{it}\xi,\eta)
 =
 \begin{bmatrix}
 i c_1\xi&0\\
 2i c_2\xi^2&i d_2\xi\eta\\
 2i c_3\xi^2\eta&i d_3\xi\eta^2
 \end{bmatrix},\;
\frac{\d}{\d t}\bigg|_{t=0}\varphi(\xi,e^{it}\eta)
 =
 \begin{bmatrix}
 	0&i d_1\eta\\
 	0&i d_2\xi\eta\\
 	i c_3\xi^2\eta&2i d_3\xi\eta^2
 \end{bmatrix}.
\end{equation*}
These are linearly independent matrices spanning $T_{\varphi(\xi,\eta)}K$, and belonging to $\ker\Phi'(\varphi(\xi,\eta))$, since $\Phi(\varphi(\xi,\eta)) \equiv 0$.\ It remains to check that there are no further kernel directions. This lengthy calculation is better performed with the help of a Computer Algebra Software. First, by equivariance of $\mb T^2$, it is enough to check the nondegeneracy at $(\xi,\eta)=(1,1)$.\ As $T_{\varphi(1,1)}K \subset \ker \Phi'(\varphi(1,1))$, our goal is to show that the rank of $\Phi'(\varphi(1,1))$ is 10.\ Let $A$ be as in \eqref{eq:defA}.\ The linearization of the constraints at $\varphi(1,1)$ is:
\begin{align*}
\Phi_{j}'(\varphi(1,1))[A]&=2\Re(\ol{c_j}a_j-\ol{d_j}b_j),\; j =1,3,\\
\Phi_{H}'(\varphi(1,1))[A]&=d_2a_1+c_1b_2-c_2b_1-d_1a_2
        -\left(\frac{3+4i}{25}\right)a_3,\\
\Phi_{{12}}'(\varphi(1,1))[A]&=\ol{c_1}a_2+c_2\ol{a_1}
        -\ol{d_1}b_2-d_2\ol{b_1} -a_1,\\
\Phi_{{13}}'(\varphi(1,1))[A]&=\ol{c_1}a_3+c_3\ol{a_1}
        -\ol{d_1}b_3-d_3\ol{b_1} +2ib_2,\\
\Phi_{{23}}'(\varphi(1,1))[A]&=\ol{c_2}a_3+c_3\ol{a_2}
        -\ol{d_2}b_3-d_3\ol{b_2}
        +(3+4i)b_1 .
\end{align*}
We parametrize $a_j = x_j + iy_j$, $b_j = u_j + iv_j$.\ Next, we differentiate the 10 functions $J_1$, $J_3$, $\Re(\Phi_H)$, $\Im(\Phi_H)$, $\Re(\Phi_{12})$, $\Im(\Phi_{12})$, $\Re(\Phi_{13})$, $\Im(\Phi_{13})$, $\Re(\Phi_{23})$, $\Im(\Phi_{23})$ with respect to the 12 variables $x_1,x_2,x_3,y_1,y_2,y_3,u_1,u_2,u_3,v_1,v_2,v_3$ to obtain $\Phi'(\varphi(1,1))$:
\[
 \Phi'(\varphi(1,1)) = \left[\begin{array}{cccccccccccc}
	2 & 0 & 0 & 1 & 0 & 0 & 0 & 0 & 0 & -1 & 0 & 0 
	\\
	0 & 0 & 4 & 0 & 0 & 3 & 0 & 0 & 2 & 0 & 0 & -3 
	\\
	0 & 0 & -\frac{3}{25} & -1 & \frac{1}{2} & \frac{4}{25} & -1 & 1 & 0 & 1 & -\frac{1}{2} & 0 
	\\
	1 & -\frac{1}{2} & -\frac{4}{25} & 0 & 0 & -\frac{3}{25} & -1 & \frac{1}{2} & 0 & -1 & 1 & 0 
	\\
	0 & 1 & 0 & 1 & \frac{1}{2} & 0 & 0 & 0 & 0 & -1 & -\frac{1}{2} & 0 
	\\
	1 & -\frac{1}{2} & 0 & -2 & 1 & 0 & -1 & \frac{1}{2} & 0 & 0 & 0 & 0 
	\\
	2 & 0 & 1 & \frac{3}{2} & 0 & \frac{1}{2} & 1 & 0 & 0 & -\frac{3}{2} & -2 & -\frac{1}{2} 
	\\
	\frac{3}{2} & 0 & -\frac{1}{2} & -2 & 0 & 1 & -\frac{3}{2} & 2 & \frac{1}{2} & -1 & 0 & 0 
	\\
	0 & 2 & 1 & 0 & \frac{3}{2} & 1 & 3 & 1 & 0 & -4 & -\frac{3}{2} & -1 
	\\
	0 & \frac{3}{2} & -1 & 0 & -2 & 1 & 4 & -\frac{3}{2} & 1 & 3 & -1 & 0 
\end{array}\right] \in \R^{10\times 12}.
\]
The rank of $\Phi'(\varphi(1,1))$ is $10$, as wanted: the Maple code, where the differential is called \texttt{DIFF}, used to make this check is attached to the ArXiv version of this paper.\ 
\end{proof}

\section{Strongly polyconvex extensions of the squared distance}
\label{sec:squared-distance-extension}

Aim of this section is to show Theorem \ref{thm:main}, i.e. to build a strong polyconvex extension of the $\dist_K^2$ function.\ Some results we will consider are rather general, hence we will work in $\R^{m\times n}$, $m,n \ge 2$, even if we are only interested in the case $n = 2$.\ We will use freely the notation of Definition \ref{def:pc}.\ In addition, we introduce $P \in \R^{(nm)\times \tau(m,n)}$, the projection matrix which acts on $M(X) \in \R^{\tau(m,n)}$ as $PM(X) \equiv X$, $\forall X \in \R^{m\times n},$ so that $P$ is the projection onto suitable $nm$ coordinates of $\R^{\tau(m,n)}$.\ We also let $P^T$ denote the transpose projection. 

We start with the following simple extension result:

\begin{lemma}
	\label{lem:convex-extension-minors}
	Let $U'\Subset U\subset\mathbb R^{m\times n}$, where $U$ is bounded.\ Suppose that $h\in C^\infty(\overline U)$ and that	$X\mapsto c_X\in\mathbb R^{\tau(m,n)}$ is smooth.\ If, for some
	$\kappa>0$, we have
	\begin{equation}	\label{eq:uniform-polyaffine-support-gap}
		h(Y)-h(X)-\bigl\langle c_X,M(Y)-M(X)\bigr\rangle \geq\kappa|Y-X|^2 \qquad\forall X,Y\in\overline U,
	\end{equation}
	then there is a smooth convex function
	$G\colon\mathbb R^{\tau(m,n)}\to\mathbb R$ such that $G(M(X))=h(X),$ $\forall X\in U'$. Moreover,
	\begin{equation}\label{eq:growth}
	|G(\mc M)| \le C(1 + |\mc M|), \quad \forall \mc M \in \R^{\tau(m,n)}
	\end{equation}
\end{lemma}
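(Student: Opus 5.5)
The plan is to build $G$ as a supremum of affine functions on $\R^{\tau(m,n)}$, one for each point of $\overline U$, and then smooth it. For $X\in\overline U$ set
\[
\ell_X(\mc M)\equiv h(X)+\langle c_X,\mc M-M(X)\rangle,\qquad G_0(\mc M)\equiv\sup_{X\in\overline U}\ell_X(\mc M).
\]
Since $\overline U$ is compact and $X\mapsto (h(X),c_X,M(X))$ is continuous, the slopes $c_X$ are uniformly bounded and the intercepts are bounded. Hence $G_0$ is a finite, convex, globally Lipschitz function, and $|G_0(\mc M)|\le C(1+|\mc M|)$. Evaluating at $\mc M=M(Y)$, hypothesis \eqref{eq:uniform-polyaffine-support-gap} gives $\ell_X(M(Y))\le h(Y)-\kappa|Y-X|^2\le h(Y)$, with equality when $X=Y$. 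So $G_0(M(Y))=h(Y)$ for all $Y\in\overline U$. Moreover, the supremum defining $G_0(M(Y))$ is attained only at $X=Y$, quantitatively: $\ell_X(M(Y))\le G_0(M(Y))-\kappa|X-Y|^2$.

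The main obstacle is smoothness, since $G_0$ is only Lipschitz in general. My plan is to show that $G_0$ is already smooth in a neighbourhood $\mc N$ of the compact set $M(\overline{U'})$. There I will take a smooth convex function $G$ that agrees with $G_0$ near $M(\overline{U'})$ and has linear growth.

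For the local smoothness I would argue as follows. Fix $Y_0\in\overline{U'}$ and $\mc M$ close to $M(Y_0)$. For every $X\in\overline U$,
\[
\ell_X(\mc M)-\ell_{X}(M(Y_0))=\langle c_X,\mc M-M(Y_0)\rangle.
\]
Combined with the quadratic gap, this shows that any near-maximizer $X$ for $\mc M$ satisfies $\kappa|X-Y_0|^2\lesssim|\mc M-M(Y_0)|$. So the supremum localizes to $X\in B_\delta(Y_0)\Subset U$, which is interior because $U'\Subset U$. Note that $\mc M$ ranges over an open set of the full space $\R^{\tau(m,n)}$, while $X$ ranges over only $nm$ dimensions. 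So the supremum is an envelope over a smooth family of affine functions, and it need not be smooth off the image $M(U)$.

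Rather than rely on an envelope theorem, I would therefore modify the construction to force smoothness. The first thing to try is to add a small convex quadratic in the non-linear coordinates that vanishes on $M(\R^{m\times n})$. That is not possible, since that set is curved. So the second thing to try is to replace the supremum by a smooth log-sum-exp/soft-max. Concretely, set
\[
G_\e(\mc M)\equiv \e\log\int_{U} e^{\ell_X(\mc M)/\e}\,\rho(X)\,\d X
\]
with $\rho$ a smooth cutoff equal to $1$ on a neighbourhood of $\overline{U'}$. This function is smooth and convex, being a log-integral of exponentials of affine functions. It also has linear growth, because $\sup_X|c_X|<\infty$. However, it only approximates $G_0$: near $M(Y)$ the Laplace-type error is $\e\log(C\e^{nm/2})$. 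To get exact equality on $M(U')$, I would correct by the smooth function $r_\e(Y)\equiv h(Y)-G_\e(M(Y))$. This correction has small $C^2$ norm, so it can be absorbed by shrinking the gap, i.e.\ by replacing $h$ by $h-r_\e$, which still satisfies \eqref{eq:uniform-polyaffine-support-gap} with constant $\kappa/2$. A fixed-point or iteration in $\e$ then closes the argument.

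Alternatively, and I expect this to be simpler, the correction can be absorbed by a polyaffine adjustment of the slopes $c_X$. I would choose whichever of these two corrections makes the bookkeeping cleanest. Finally, $G$ equals $h\circ$ on $M(U')$ by construction, and \eqref{eq:growth} follows from the bound on the slopes.
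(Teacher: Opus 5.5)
Your proposal does not establish the decisive step: a globally smooth convex $G$ with $G(M(X))=h(X)$ \emph{exactly} for $X\in U'$. The soft-max $G_\e$ is smooth, convex and of linear growth, but $G_\e\circ M$ only approximates $h$, and the correction is asserted rather than carried out.

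Replacing $h$ by $h\pm r_\e$ changes $G_\e$ and creates a new error, so you are really solving a nonlinear equation. You would also have to change the slopes consistently (e.g.\ $c_X\mapsto c_X\pm P^T Dr_\e(X)$). You would have to cut $r_\e$ off as well, since it is not small where $\rho\not\equiv1$ nearby: if $\rho=0$ on $B_s(Y)$, then $r_\e(Y)\ge\kappa s^2-\e\log|U|$.

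More seriously, set $q_Y(X)\equiv h(Y)-\ell_X(M(Y))$. Laplace's method gives $r_\e(Y)=-\frac{nm}{2}\e\log(2\pi\e)+\frac{\e}{2}\log\det D^2_{XX}q_Y(Y)+O(\e^2)$, and $D^2_{XX}q_Y(Y)$ involves $D^2h$ and $Dc_X$. So the error in $C^k$ is controlled only by the data in $C^{k+2}$. The proposed iteration therefore loses two derivatives per step and is not a contraction in any fixed $C^k$ space. Nothing in the proposal overcomes this, including the unspecified ``polyaffine adjustment of the slopes''.

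You also abandoned the right construction for a wrong reason. Your $G_0$, its Lipschitz bound, $G_0(M(Y))=h(Y)$, and your localization estimate are all correct. Moreover, the gap gives a \emph{nondegenerate} interior maximum: $D^2_{XX}\ell_X(M(Y))|_{X=Y}\le-2\kappa$ for $Y\in U$. With your localization, the implicit function theorem applied to $D_X\ell_X(\mc M)=0$ then gives a unique, smoothly varying maximizer for every $\mc M$ in a full neighbourhood of $M(\overline{U'})$ in $\R^{\tau(m,n)}$. Hence $G_0$ is smooth there; that $X$ ranges over fewer dimensions than $\mc M$ is irrelevant. This is exactly the paper's argument.

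The genuine difficulty, which your proposal never confronts, is extending from a neighbourhood of the non-convex set $M(\overline{U'})$ to all of $\R^{\tau(m,n)}$. A smoothed maximum with a convex penalty vanishing near $M(\overline{U'})$ does not suffice, because such a penalty vanishes on all of $\co M(\overline{U'})$, where $G_0$ may have kinks. In addition, $DG_0(\mc M)=c_{X(\mc M)}$, so $D^2G_0$ has rank at most $nm<\tau(m,n)$, and the positive definiteness needed for a smooth convex extension fails.

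The paper's key device is to take instead the supremum of $\ell_X(\mc M)+\frac{\kappa'}{2}|\mc M-M(X)|^2$ with $\kappa'=\kappa/\Lip_R^2(M)$. The gap survives as $\frac{\kappa}{2}|X-Y|^2$, so the IFT argument still works. The envelope becomes $\frac{\kappa'}{2}|\mc M|^2$ plus a supremum of affine functions, hence has positive definite Hessian near $M(\overline{U'})$. That strict convexity is what allows a smooth convex extension to the convex hull via Yan's extension theorem. A Lipschitz convex extension to $\R^{\tau(m,n)}$ and a smoothed maximum with $\lambda\delta+\mathrm{const}$ then give global smoothness and the linear growth bound.
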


\begin{proof}
	Set $\kappa' \equiv \kappa/\Lip_R^2(M)$, where $\Lip_R(M)$ is the Lipschitz constant of $M$ over a ball $B_R \subset \R^{m\times n}$ with $U \subset B_{R/2}$, and define:
	\[
	\varphi(X,\mathcal{M})\equiv h(X)+\bigl\langle c_X,\mc M-M(X)\bigr\rangle +\frac{\kappa'}{2}\bigl|\mc M-M(X)\bigr|^2 \text{ and } \widehat G(\mc M)\equiv 	\sup_{X\in\overline U}\varphi(X,\mc M).
	\]
	Then, $\widehat G\colon \R^{\tau(m,n)}\to \R$ is convex, being the supremum of convex functions.\ If $Y\in U$,
	\begin{equation}\label{eq:XMY}
		\varphi(X,M(Y)) \overset{\eqref{eq:uniform-polyaffine-support-gap}}{\leq} h(Y)- \frac{\kappa}{2}|Y-X|^2	=\varphi(Y,M(Y))- \frac{\kappa}{2}|Y-X|^2.
	\end{equation}
	Thus $X=Y$ is the unique maximizer defining $\widehat G(M(Y))$, and
	$\widehat G(M(Y))=h(Y)$.\ Since it is an interior maximizer, $\D_X\varphi(Y,M(Y))=0$.\ From this, and rewriting \eqref{eq:XMY} for $X=Y+tZ$ as
	\[
	\varphi(Y+tZ,M(Y)) - \varphi(Y,M(Y)) \le - \frac{t^2\kappa}{2}|Z|^2,
	\]
	we deduce that:
	\begin{equation}\label{eq:nonde}
	\D^2_{XX}\varphi(Y,M(Y))[Z,Z]\leq-\kappa|Z|^2, \quad \forall Z \in \R^{m\times n}.
    \end{equation}
	Now let $X \in \overline{U}$ be any maximizer for $\widehat G (\mc M)$, for any $\mc M \in \R^{\tau(m,n)}$.\ Take $T \in \overline{U'}$ with $|\mc M - M(T)| = \dist(\mc M,M(\overline{U'}))$ and write:
	\begin{align*}
	\widehat G(\mc M) - \widehat G(M(T)) &= \varphi(X,\mc M) - h(T) = \varphi(X,\mc M) - \varphi(X,M(T)) + \varphi(X,M(T)) - h(T) \\
	&\overset{\eqref{eq:XMY}}{\le } \varphi(X,\mc M) - \varphi(X,M(T)) -\frac{\kappa}{2}|X-T|^2.
	\end{align*}
	 Since $\widehat G$ is finite and convex, and thus Lipschitz on compact sets, we readily deduce from the previous inequality that $$|X-T| \le C(|\mathcal{M}|)\sqrt{\dist(\mc M,M(\overline{U'}))}.$$ Therefore, for all $\mc M$ near $M(\overline{U'})$, every maximizer defining $\widehat G$ also lies close to a single matrix $T$ chosen as above.\ In particular, every maximizer lies in $U$.\ Now \eqref{eq:nonde} shows, through the implicit function theorem applied to the map $D_X\varphi(X,\mc M)$, that the maximizer is unique and depends smoothly on $\mc M$ near $M(\overline{U'})$.\ Hence $\widehat G$ is smooth there, and it remains to extend it to a smooth convex function on the whole space.\ To this end, we start by noticing that we can expand the square in the definition of $\widehat G$ to write:
	\[
	\widehat G(\mc M)= \frac{\kappa'}{2}|\mc M|^2+\ell(\mc M),
	\]
	where $\ell$ is convex and smooth in an open, bounded set $O$ with $M(\overline{U'}) \subset O$.\ Take now open $O_i$ with 
	\begin{equation}\label{eq:conta}
	M(\overline{U'})\subset O_3 \Subset O_2 \Subset O_1 \Subset O.
	\end{equation}
	As $\widehat G$ is globally convex, it is convex on $\overline{O}$ in the sense of \cite{Yan2014}.\ Moreover, its Hessian is positive definite in $O$ and hence we can extend it to a smooth, convex function $G_1$ in the convex hull of $O_1$, $\co O_1$, by \cite[Theorem 3.4]{Yan2014}.\ Up to considering a slightly smaller $O_1$ still fulfilling \eqref{eq:conta}, we can assume that $G_1$ is Lipschitz over $\overline{\co O_1}$, and hence by \cite[Theorem 4.1]{Yan2014}, it admits a convex extension $G_2$ to the full space.\ It is not hard to see from \cite[Theorem 4.1]{Yan2014} that $G_2$ is still globally $L$-Lipschitz, for some $L >0$.\ The function $G_2$ is only introduced as a technical step towards the smooth extension $G_1$ outside $\co O_3$.\ Indeed, we now consider the convex function $m(a,b) \equiv \max\{a,b\}$, and mollify it with a radial kernel in such a way that the resulting function $\tilde m(a,b)$ coincides with $m(a,b)$ in $\{(a,b) \in \R^2: |a-b|\ge  1\}$, and that it is still convex and nondecreasing in each variable.\ Similarly, we consider the function $\delta(\mc M)$, obtained by mollifying $\dist(\mc M, \co O_2)$, so that:
	\begin{enumerate}
		\item\label{prop:1} $\delta$ is convex and smooth;
		\item\label{prop:2} $\delta \equiv 0$ on $\co O_3$ and $\delta(\mc M) \ge \eta > 0$ in $(\co O_1)^c$;
		\item\label{prop:3} $\delta \ge \dist(\cdot, \co O_2)$ everywhere.
	\end{enumerate}
	We can finally set, for $\lambda = L + (\sup_{\co O_2}G_2 - \inf_{\co O_3}G_2 + 10)/\eta$:
	\[
	G(\mc M) \equiv \tilde m(G_2(\mc M), \lambda\delta(\mc M)+ \inf_{\co O_3} G_2-2).
	\]
	$G$ is still convex, thanks to the properties of $\tilde m$ and the convexity of the functions in the arguments.\ In $\co O_3$ we have $ G(\mc M) = G_2(\mc M)$, and hence $ G(\mc M) = \widehat G(\mc M)$ in $O_3$.\ If we now consider any $\mc M \in (\co O_1)^c$ and $\mc M_0 \in \overline{\co O_2}$ with $|\mc M-\mc M_0| = \dist(\mc M,\co O_2)$, we can bound, using that $G_2$ is $L$-Lipschitz:
	\[
	G_2(\mc M) \le L\dist(\mc M, \co O_2) + \sup_{\co O_2} G_2 \overset{\ref{prop:3}}{\le} L\delta(\mc M) + \sup_{\co O_2} G_2 \overset{\ref{prop:2}}{<} \lambda\delta(\mc M) + \inf_{\co O_3}G_2  - 10,
	\]
	using our choice of $\lambda$.\ This shows that $G(\mc M) = \lambda\delta(\mc M)+ \inf_{\co O_3} G_2-2$ in $(\co O_1)^c$.\ Hence, $G$ is the required smooth, convex extension of $\widehat G$ to the whole $\R^{\tau(m,n)}$, with $G = \widehat G$ on $O_3$, and hence $G(M(X)) = h(X)$ for all $X \in U'$.\ Finally, since $G$ coincides with $ \lambda\delta(\mc M)+ \inf_{\co O_3} G_2-2$ outside a large ball, \eqref{eq:growth} is also fulfilled.\ This concludes the proof.
\end{proof}

With this auxiliary lemma at our disposal, we can show the following general result:

\begin{proposition}
\label{prop:abstract-extension}
Let $K\subset\mathbb R^{m\times n}$ be a compact smoothly embedded
submanifold. Suppose that there are $\delta>0$ and a smooth family
$K\ni A\mapsto\Psi_A$ of polyaffine functions such that
\begin{equation}
\label{eq:abstract-hypothesis}
\Psi_A=0\quad\text{on }K,
\qquad
\Psi_A'(A)[A-B]\geq\delta|A-B|^2
\quad\text{for every }A,B\in K.
\end{equation}
Then there are $\rho,\varepsilon>0$ and a smooth convex function
$G\colon\mathbb R^{\tau(m,n)}\to\mathbb R$ such that the integrand
\begin{equation*}
F(A)\equiv\varepsilon|A|^2+G(M(A)),
\end{equation*}
satisfies $F(A)=\dist^2_K(A)$ whenever $\dist_K(A)<\rho$, and
\begin{equation}\label{eq:growth2}
|F(A)| \le C(1 + |A|^{\min\{m,n\}}), \quad \forall A \in \R^{m\times n}.
\end{equation}
\end{proposition}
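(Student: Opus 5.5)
The plan is to apply Lemma \ref{lem:convex-extension-minors} to
$$h(X)\equiv \dist^2_K(X)-\varepsilon|X|^2$$
on a thin tubular neighbourhood $U=\{\dist_K<2\rho\}$, with $U'=\{\dist_K<\rho\}$. Here $\rho$ is below the reach of $K$, so that $\dist_K^2$ is smooth on $\overline U$ and the nearest-point projection $\pi\colon\overline U\to K$ is smooth. If the lemma applies, it produces a smooth convex $G$ with $G(M(X))=h(X)$ on $U'$. Then $F(A)=\varepsilon|A|^2+G(M(A))$ equals $\dist_K^2$ on $\{\dist_K<\rho\}$. The growth bound \eqref{eq:growth2} follows from \eqref{eq:growth} together with $|M(A)|\le C(1+|A|^{\min\{m,n\}})$.

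\textbf{Construction of $c_X$.} For $X\in\overline U$ set $A=\pi(X)$ and $t>0$ a large constant to be fixed. I would take the supporting function
$$P_X(Y)\equiv h(X)+\D h(X)[Y-X]-t\bigl(\Psi_A(Y)-\Psi_A(X)-\Psi_A'(X)[Y-X]\bigr).$$
This is polyaffine in $Y$, since it is an affine function plus a polyaffine one. By Lemma \ref{lem:polyaff} it can therefore be written as $h(X)+\langle c_X,M(Y)-M(X)\rangle$ for a vector $c_X$ depending smoothly on $X$, because $\pi$ and $A\mapsto\Psi_A$ are smooth. Condition \eqref{eq:uniform-polyaffine-support-gap} then becomes
$$E(X,Y)\equiv h(Y)-h(X)-\D h(X)[Y-X]+t\,R_A(X,Y)\ge\kappa|Y-X|^2,$$
where $R_A(X,Y)=\Psi_A(Y)-\Psi_A(X)-\Psi_A'(X)[Y-X]$.

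\textbf{Two model estimates on $K$.} First take $X=A$ and $Y=B$ with $A,B\in K$. Since $\dist_K^2$ vanishes to second order on $K$, we have $\D h(A)=-2\varepsilon A$, and so $h(B)-h(A)-\D h(A)[B-A]=-\varepsilon|B-A|^2$. Moreover $R_A(A,B)=\Psi_A'(A)[A-B]\ge\delta|A-B|^2$ by \eqref{eq:abstract-hypothesis}. Hence
$$E(A,B)\ge(t\delta-\varepsilon)|A-B|^2.$$
Second, consider the Hessian at $X=A\in K$. The Hessian of $\dist_K^2$ at $A$ is $2\Pi_{N}$, twice the projection onto the normal space. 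Differentiating $\Psi_A(B(s))=0$ along a curve $B(s)\subset K$ with $B(0)=A$, $B'(0)=\tau$, and combining with the inequality in \eqref{eq:abstract-hypothesis}, gives $\tfrac12\D^2\Psi_A(A)[\tau,\tau]\ge\delta|\tau|^2$ for tangent $\tau$. So $\D^2_{YY}E(A,A)=2\Pi_N-2\varepsilon I+t\D^2\Psi_A(A)$. Splitting $Z=\tau+\nu$ and absorbing the cross terms $t\,\D^2\Psi_A(A)[\tau,\nu]$ (bounded by $Ct|\tau||\nu|$) is a problem, since the normal coefficient is only $2$.

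I would fix this by first choosing $t$ small enough that $t\|\D^2\Psi_A\|\le\tfrac12$ uniformly in $A\in K$. Then by Young's inequality the Hessian is at least $c(t)|Z|^2$ for some $c(t)>0$ independent of $\varepsilon$. Only after that would I choose $\varepsilon\ll\min\{t\delta,c(t)\}$.

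\textbf{From $K$ to the neighbourhood $U$.} This is the main obstacle: passing from these two model situations to all $X,Y\in\overline U$ with a uniform $\kappa$, once $\rho$ is small.

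For $|Y-X|\le r$, Taylor's theorem reduces the claim to positivity of $\D^2_{YY}E(X,\cdot)$ on $B_r(X)$. This follows from the Hessian bound at $A=\pi(X)$ by continuity and compactness of $K$, choosing $r$ and $\rho$ small.

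For $|Y-X|\ge r$, I would write $X=A+\nu_X$ and $Y=B+\nu_Y$ with $A=\pi(X)$, $B=\pi(Y)$ and $|\nu_X|,|\nu_Y|<2\rho$. The normal contribution $|\nu_Y|^2\ge0$ enters $h(Y)$ with a good sign. All remaining differences from $E(A,B)$ are $O(\rho)$ uniformly, being smooth errors over a compact set. If $|A-B|\gtrsim r$, then $E(A,B)\ge\tfrac12 t\delta r^2$ absorbs these $O(\rho)$ errors for $\rho\ll r^2$. If instead $|A-B|$ is small while $|Y-X|\ge r$, then $X$ and $Y$ are both within $O(\rho)$ of nearby points of $K$ and so $|Y-X|\lesssim\rho+|A-B|$. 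Hence this case cannot occur once $\rho\ll r$. This yields \eqref{eq:uniform-polyaffine-support-gap} with some $\kappa>0$.

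Finally, the order in which the constants are chosen is: first $t$, then $\varepsilon$, then $r$, then $\rho$.
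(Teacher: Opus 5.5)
Your proposal is correct and is essentially the paper's proof: you use the same $c_X$ (your $t$ is the paper's $\lambda$) and the same two estimates on $K$, and your near/far split makes explicit what the paper calls a ``simple contradiction--compactness argument''. One small fix: $t\|\D^2\Psi_A\|\le\frac12$ is not enough for the Young step, because the tangential coefficient $2t\delta$ also scales with $t$; you need $t\sup_{A\in K}\|\D^2\Psi_A(A)\|^2\lesssim\delta$, which is what the paper's constant $C$ with $\lambda C\le 1$ encodes.
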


\begin{proof}
Write $d(X)\equiv \dist^2_K(X)$.\ For $B\in K$, $T\in T_BK$, and $N\perp T_BK$,
\begin{equation}
\label{eq:squared-distance-Hessian}
d'(B)=0,
\qquad
d''(B)[T+N,T+N]=2|N|^2.
\end{equation}
Since the family $A\mapsto\Psi_A$ is smooth and $K$ is compact, we can consider a constant $C \ge 2\delta$ to get, through Young's inequality:
\begin{equation}
\label{eq:boundsPsi''}
2|\Psi_A''(X)[T,N]|+|\Psi_A''(X)[N,N]|
\leq\frac\delta2|T|^2+\frac C2|N|^2,
\end{equation}
for every $A\in K$, $X$ in a neighborhood of $K$, $T\in T_AK$, and $N\perp T_AK$. Choose
\begin{equation}\label{eq:choices}
\lambda>0\quad\text{with}\quad\lambda C\leq1,
\qquad
\varepsilon\equiv\frac{\lambda\delta}{2},
\qquad
h(X)\equiv d(X)-\varepsilon|X|^2.
\end{equation}
To conclude the proof we only need to apply Lemma \ref{lem:convex-extension-minors}, and in particular show that $h$ solves \eqref{eq:uniform-polyaffine-support-gap} for some vectors $c_X$.\ Indeed, once that is done, the function $G$ provided by Lemma \ref{lem:convex-extension-minors} is such that $G(M(A)) = h(A) = d(A) - \varepsilon |A|^2$ for $A$ in a neighborhood of $K$, and \eqref{eq:growth2} is a consequence of \eqref{eq:growth} and the definition of $M$.\ To define the vectors $c_X$, let $\pi$ be the nearest-point projection onto $K$, defined in a tubular neighborhood of $K$.\ For $X$ in that neighborhood, let $B=\pi(X)$ and write $\Psi_B(Y)=a_B+\langle b_B,M(Y)\rangle$, recalling Lemma \ref{lem:polyaff}.\ Finally, set:
\[
c_X\in\mathbb R^{\tau(m,n)},
\qquad
c_X\equiv P^T\bigl(h'(X)+\lambda\Psi_B'(X)\bigr)-\lambda b_B.
\]
We start by noticing that, with this choice,
\begin{equation}
	\label{eq:polyaffine-support-minors}
	\bigl\langle c_X,M(Y)-M(X)\bigr\rangle =h'(X)[Y-X]-\lambda\bigl(\Psi_B(Y)-\Psi_B(X)-\Psi_B'(X)[Y-X]\bigr).
\end{equation}
Inequality \eqref{eq:uniform-polyaffine-support-gap} follows from a simple contradiction-compactness argument based on the following two properties:
\begin{enumerate}[label=(\roman*)]
\item \label{eq:glo} \eqref{eq:uniform-polyaffine-support-gap} holds for all $X,Y \in K$;
\item \label{eq:lo} there exists $\kappa > 0$ such that 
\begin{equation}
	\label{eq:modified-Hessian-positive}
	\bigl(h''(A)+\lambda\Psi_A''(A)\bigr)[Z,Z]\geq\kappa|Z|^2 \qquad\forall A\in K.
\end{equation}
\end{enumerate}
 To establish \ref{eq:glo}-\ref{eq:lo}, we start by observing that, for $A,B\in K$, \eqref{eq:abstract-hypothesis}-\eqref{eq:squared-distance-Hessian} give:
\[
h(A)=-\varepsilon|A|^2,
\qquad h'(A)=-2\varepsilon A,
\qquad \Psi_A(A)=\Psi_A(B)=0.
\]
Substitution in \eqref{eq:polyaffine-support-minors}, followed by
\eqref{eq:abstract-hypothesis} and the choice
$\varepsilon=\lambda\delta/2$, yields
\begin{equation}
\label{eq:polyaffine-support-on-K}
\begin{aligned}
h(B)-h(A)-\bigl\langle c_A,M(B)-M(A)\bigr\rangle
&=-\varepsilon|A-B|^2+\lambda\Psi_A'(A)[A-B]\geq\varepsilon|A-B|^2,
\end{aligned}
\end{equation}
which shows \ref{eq:glo}.\ To see \ref{eq:lo}, start by dividing \eqref{eq:polyaffine-support-on-K} by $|A-B|^2$ and let $B\to A$ along a curve in $K$ with velocity $T\in T_AK$:
\[
\bigl(h''(A)+\lambda\Psi_A''(A)\bigr)[T,T]
\geq2\varepsilon|T|^2=\lambda\delta|T|^2.
\]
For $Z=T+N$, where $T\in T_AK$ and $N\perp T_AK$, it follows
from \eqref{eq:squared-distance-Hessian}--\eqref{eq:boundsPsi''} that
\begin{align*}
\bigl(h''(A)+\lambda\Psi_A''(A)\bigr)[Z,Z]
&\geq\frac{\lambda\delta}{2}|T|^2
 +\left(2-\lambda\delta-\frac{\lambda C}{2}\right)|N|^2\geq\frac{\lambda\delta}{2}|T|^2+|N|^2.
\end{align*}
Here the last inequality follows from \eqref{eq:choices}.\ This shows \ref{eq:lo} with $\kappa\equiv \lambda\delta/2 < 1$.
\end{proof}

When applying Proposition \ref{prop:abstract-extension}, we want the polyconvex integrand $F$ to vanish exactly on $K$. The next lemma gives a simple necessary and sufficient condition for this to  hold.

\begin{lemma}
\label{lem:global-majorant}
Under the hypotheses of Proposition~\ref{prop:abstract-extension},
suppose in addition that
\begin{equation}
\label{eq:zerosetcond}
\operatorname{conv}M(K)\cap M(\mathbb R^{m\times n})=M(K).
\end{equation}
Then the integrand in the proposition may be chosen so that
\begin{equation}
\label{eq:majorant}
F(X)\geq\dist^2_K(X)\qquad\forall X\in\mathbb R^{m\times n}.
\end{equation}
In particular, $F\geq0$ and $F^{-1}(0)=K$.\ If $\min\{m,n\} = 2$, then we can achieve $|F''|\leq C$. 
\end{lemma}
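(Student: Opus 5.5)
We modify the integrand from Proposition \ref{prop:abstract-extension} by taking a maximum with a second convex function of the minors that handles the region far from $K$. The plan has four steps.

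\emph{Step 1: a global convex minorant.} Let $F_0=\varepsilon|\cdot|^2+G\circ M$ be the integrand from Proposition \ref{prop:abstract-extension}, so $F_0=\dist_K^2$ on $\{\dist_K<\rho\}$. Set $\mathcal C\equiv\operatorname{conv}M(K)$, which is a compact convex set, and let
\[
g(\mc M)\equiv \dist^2(\mc M,\mathcal C).
\]
This $g$ is convex and $C^{1,1}$, and after a convex smoothing it can be taken smooth, vanishing exactly near $\mathcal C$.

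\emph{Step 2: separation from $\mathcal C$ away from $K$.} By \eqref{eq:zerosetcond}, if $\dist_K(X)\ge\rho/2$ then $M(X)\notin\mathcal C$. On compact sets this gives $g(M(X))\ge c(R)>0$ whenever $\rho/2\le\dist_K(X)\le R$. For large $|X|$ we also have $g(M(X))\ge (|X|-C)_+^2$, since $|M(X)|\ge|X|$ and $\mathcal C$ is bounded. Combining the two regimes gives a bound of the form $g(M(X))\ge \theta\,\dist_K^2(X)$ on $\{\dist_K\ge\rho/2\}$ for some $\theta>0$. The linear-minor part controls the quadratic growth at infinity, and compactness handles the intermediate region.

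\emph{Step 3: gluing.} Define
\[
F\equiv \tilde m\bigl(F_0,\ \Lambda\, g\circ M+\varepsilon|\cdot|^2-\varepsilon C_0\bigr),
\]
where $\tilde m$ is the smoothed maximum from the proof of Lemma \ref{lem:convex-extension-minors}, $C_0$ is a constant, and $\Lambda$ is large. Writing $F=\varepsilon|\cdot|^2+\tilde G\circ M$, we need $\tilde G$ convex. This needs care because $\varepsilon|X|^2$ is not a function of $M$ alone. We handle it by putting the quadratic term inside: $\varepsilon|X|^2$ is a convex function of the linear coordinates $PM(X)$. So we take the maximum of the two convex functions of $\mc M$,
\[
\varepsilon|P\mc M|^2+G(\mc M)\quad\text{and}\quad \varepsilon|P\mc M|^2+\Lambda g(\mc M)-\varepsilon C_0,
\]
subtract $\tfrac{\varepsilon}{2}|P\mc M|^2$, and accept strong polyconvexity with constant $\varepsilon/2$ instead of $\varepsilon$.

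The parameters are chosen as follows.
\begin{itemize}
\item Near $K$, i.e.\ on $\{\dist_K<\rho/4\}$, $g\circ M$ vanishes near $M(K)$ (after making the smoothing of $g$ vanish on a neighbourhood of $\mathcal C$ small enough), so the second branch equals $\varepsilon|X|^2-\varepsilon C_0$. Since $\varepsilon|X|^2$ is bounded on this bounded set, taking $C_0$ large makes the first branch $F_0=\dist_K^2$ dominate there.
\item Away from $K$, Step 2 with $\Lambda$ large gives $F\ge\Lambda\theta\dist_K^2-C\ge\dist_K^2$ outside the tubular region.
\item In the transition shell $\rho/4\le\dist_K\le\rho$, we have $F\ge F_0=\dist_K^2$.
\end{itemize}
Together these give \eqref{eq:majorant} everywhere, and in particular $F\ge0$ with $F^{-1}(0)=K$.

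\emph{Step 4: bounded Hessian when $\min\{m,n\}=2$.} The minors are then of degree at most $2$. We replace $g$ by a convex function of quadratic growth in the linear variables and linear growth in the $2\times2$ minors, for instance a Huber-type smoothing of the distance in the minor coordinates plus a quadratic term in the linear coordinates. With $G$ of linear growth and bounded Hessian, as provided by \eqref{eq:growth}, the composition with minors of degree $2$ has bounded second derivatives. Using these Huber-type versions of $g$ keeps $|F''|\le C$.

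The main obstacle is Step 2: obtaining a uniform quantitative lower bound $g(M(X))\gtrsim\dist_K^2$ that is compatible with bounded Hessian. At infinity it must come from the linear coordinates alone, which is why the quadratic part should involve only $P\mc M$.
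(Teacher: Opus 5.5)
Your Steps 1--3 are a valid alternative to the paper's gluing, but Step 4, the bounded-Hessian claim for $\min\{m,n\}=2$, has a genuine gap.

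\textbf{Steps 1--3.} The paper takes no maximum: it sets $F_1=F_0+\lambda f\circ M$, with $f\ge 0$ convex and vanishing on $\operatorname{conv}M(K)+\overline{\mb B}_\sigma$. This gives $F_1=F_0$ near $K$ and $F_1\ge F_0$ directly, keeps $\varepsilon$, and preserves \eqref{eq:growth2}, because $f$ is Lipschitz in the minors plus $\chi(|P\mc M|^{\min\{m,n\}}-R_0^{\min\{m,n\}})$. Your smoothed maximum also works. It relies on $\tilde m\ge\max$, which holds for a radial kernel. The constants must be fixed in order: the neighbourhood of $\mathcal C$ on which $g$ vanishes, then $C_0$, then $\Lambda$. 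You then get $F=\dist_K^2$ on $M^{-1}(\{g=0\})$, not on all of $\{\dist_K<\rho/4\}$. Note that with $g=\dist^2(\cdot,\mathcal C)$ your $F$ grows like $|X|^{2\min\{m,n\}}$, so \eqref{eq:growth2} is lost.

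\textbf{The gap in Step 4.} You claim that a convex $G$ with linear growth and bounded Hessian gives $|(G\circ M)''|\le C$ when the minors have degree $2$. This is false. By the chain rule,
\[
(G\circ M)''(X)[Z,Z]=G''(M(X))[M'(X)Z,M'(X)Z]+G'(M(X))[M''[Z,Z]].
\]
The second term is bounded, but $|M'(X)Z|$ is of order $|X||Z|$, so a bounded $G''$ only gives $|(G\circ M)''(X)|\lesssim 1+|X|^2$.

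Here is a concrete counterexample. Let $h$ be smooth and convex, with $h''(0)=1$ and $h$ affine at infinity. Let $G(\mc M)=h(\mc M_{12})$, where $\mc M_{12}$ is the coordinate of the minor $X_{11}X_{22}-X_{12}X_{21}$, and let $E_{ij}$ be the matrix units. At $X=tE_{11}$, in direction $Z=E_{22}$, we get $G\circ M(X+sZ)=h(ts)$, so $(G\circ M)''(X)[Z,Z]=t^2$.

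A Huber-type smoothing in the minor variables does exactly this near rank-one matrices, where the minors stay bounded while $|X|\to\infty$. Also, \eqref{eq:growth} gives only linear growth of $G$, not a Hessian bound. So no tuning of the growth rates of $g$ alone yields $|F''|\le C$.

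\textbf{The missing step.} You need to make $F$ exactly quadratic outside a compact set. First write the majorant as $\varepsilon|X|^2+G_1(M(X))$ with $|G_1(M(X))|\le C(1+|X|^2)$. This holds for your Huber-type variant, but not for $g=\dist^2(\cdot,\mathcal C)$. Then take a smoothed maximum of $G_1(\mc M)$ and $L|P\mc M|^2-C_1$, choosing $L>C$ first and then $C_1$ large. The result:
\begin{enumerate}
\item agrees with $G_1$ near $M(K)$;
\item dominates $G_1$ everywhere;
\item equals $L|X|^2-C_1$ at $M(X)$ for $|X|$ large.
\end{enumerate}
Hence $F=(\varepsilon+L)|X|^2-C_1$ outside a compact set and $|F''|\le C$. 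This is how the paper concludes.
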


\begin{proof}
Let $F_0(X)=\varepsilon|X|^2+G_0(M(X))$ be given by
Proposition~\ref{prop:abstract-extension}, and let $U_0$ be a tubular
neighborhood of $K$ on which $F_0=\dist^2_K$.\ The desired integrand will be of the form
\begin{align*}
F_1(X) & \equiv F_0(X) + \lambda f(M(X))\\
& = \e |X|^2 + G_0 (M(X)) + \lambda f(M(X))  \equiv \e |X|^2 + G_1(M(X)),
\end{align*}
where $f\geq0$ is smooth, convex, vanishes near $M(K)$, and
$\lambda$ is chosen so that $\lambda f(M(X))$ dominates
$\dist^2_K(X)-F_0(X)$ away from $U_0$. To achieve this, let first $K_0\equiv\operatorname{conv}M(K)$ and choose $\sigma>0$ such that
\begin{equation}
\label{eq:convex-hull-neighborhood}
M^{-1}(K_0+\overline{\mb B}_{4\sigma})\subset U_0.
\end{equation}
This is possible by \eqref{eq:zerosetcond} and compactness.\ Let next $\eta_\sigma$ be a standard radial mollifier with $\supp \eta_\sigma \subset \mb B_\sigma$, and set
\[
g\equiv \eta_\sigma*\dist(\cdot,K_0+\overline{\mb B}_{2\sigma})\geq 0.
\]
Then $g$ is convex, globally Lipschitz, and vanishes exactly on $K_0+\overline{\mb B}_\sigma$. Choose
$R_0$ so that $P(K_0+\overline{\mb B}_\sigma)\subset
\overline{\mb B}_{R_0}$, and let
$\chi\colon\mathbb R\to[0,\infty)$ be smooth, convex, and
nondecreasing, with $\chi=0$ on $( -\infty,0]$, $\chi>0$ on
$(0,\infty)$, and $\chi$ affine on $[1,\infty)$. Thus
\begin{equation}\label{eq:ff}
f(\mc M)\equiv g(\mc M)+\chi\bigl(|P\mc M|^{\min\{m,n\}}-R_0^{\min\{m,n\}}\bigr)
\end{equation}
is smooth, convex, nonnegative and vanishes precisely on $K_0+\overline{\mb B}_\sigma$.\ We now choose the constant $\lambda$. By \eqref{eq:convex-hull-neighborhood}, $f(M(X))\ge \delta >0$ whenever
$X\notin U_0$, and by \eqref{eq:ff}, $f(M(X)) \ge c(|X|^{\min\{m,n\}})- C$ for large values of $X$. Hence, using also \eqref{eq:growth2} we see that
\[
\lambda_*\equiv
\sup_{X\notin U_0}
\frac{\dist^2_K(X)-F_0(X)}{f(M(X))}<\infty.
\]
For $\lambda>\max\{0,\lambda_*\}$, we then have \eqref{eq:majorant}.

We now assume $\min\{m,n\}=2$, and we modify $F_1$ so that its Hessian is bounded.\ By \eqref{eq:growth2}-\eqref{eq:ff} we have that $|G_1(M(X))|\leq C(1+|X|^2)$.\ Similarly to the last steps of Lemma \ref{lem:convex-extension-minors}, we can take a smooth convex maximum of $G_1(\mc M)$ and $L|P\mc M|^2-C_1$, with $L$ and then $C_1$ sufficiently large to obtain a smooth convex function $G$ which agrees with $G_1$ near $M(K)$, dominates $G_1$ everywhere, and satisfies $G(M(X))=L|X|^2-C_1$ whenever $|X|$ is sufficiently large.\ Thus $F(X)=\varepsilon|X|^2+G(M(X))$ still satisfies \eqref{eq:majorant}, but it is also quadratic outside a compact set, thus $|F''|\leq C$.
\end{proof}

Finally, we give a simple general criterion for the conditions in Lemma \ref{lem:global-majorant} to hold.

\begin{lemma}
\label{lem:spherical-transversality-chord}
Let $K\subset\mathbb R^{m\times n}$ be a compact smoothly embedded submanifold with $$K\subseteq  \{A\in \R^{m\times n} :|A|=c\}, \qquad c>0.$$
With $\Phi\equiv (\Phi_1,\ldots,\Phi_N)$, where each $\Phi_j\colon \R^{m\times n}\to \R$ is polyaffine,
suppose also that
\begin{equation}
\label{eq:abstractcondsK}
K=\Phi^{-1}(0),
\qquad
\ker\Phi'(A)=T_AK\qquad\forall A\in K.
\end{equation}
Then there is a smooth family $K\ni A\mapsto\Psi_A$ of linear
combinations of the $\Phi_j$ such that
\[
\Psi_A'(A)[A-B]=|A-B|^2
\qquad\forall A,B\in K.
\]
Moreover, we also have
\[
\operatorname{conv}M(K)\cap M(\mathbb R^{m\times n})=M(K).
\]
\end{lemma}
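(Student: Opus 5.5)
The idea is to exploit the sphere constraint: on the sphere $|A|=c$, $|A-B|^2 = 2c^2-2\langle A,B\rangle = 2\langle A, A-B\rangle$. So it suffices to find, for each $A\in K$, coefficients $\mu(A)\in\R^N$ depending smoothly on $A$ with
\[
\textstyle\sum_j \mu_j(A)\,\Phi_j'(A) = 2\langle A,\cdot\rangle \quad\text{as linear functionals on } \R^{m\times n}.
\]
Then $\Psi_A\equiv\sum_j\mu_j(A)\Phi_j$ is polyaffine, vanishes on $K$, and satisfies $\Psi_A'(A)[A-B]=2\langle A,A-B\rangle=|A-B|^2$.

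The functional $2\langle A,\cdot\rangle$ annihilates $T_AK$, because $K$ lies in the sphere, so $A\perp T_AK$. By \eqref{eq:abstractcondsK}, $T_AK=\ker\Phi'(A)$, hence $2\langle A,\cdot\rangle$ lies in $(\ker\Phi'(A))^\perp=\operatorname{range}\Phi'(A)^T$. Since $\ker\Phi'(A)$ has constant dimension $\dim K$, the rank of $\Phi'(A)$ is constant along $K$. A smooth choice of $\mu$ is then
\[
\mu(A)\equiv \bigl(\Phi'(A)^T\bigr)^{+}(2A),
\]
using the Moore--Penrose pseudoinverse, which depends smoothly on $A$ when the rank is constant. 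Alternatively, one can take the minimal-norm solution of $\Phi'(A)^T\mu=2A$, which is smooth by the same constant-rank argument.

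For the convex hull statement, the inclusion $M(K)\subset\operatorname{conv}M(K)\cap M(\R^{m\times n})$ is trivial. For the converse, take $X$ with $M(X)=\sum_i t_iM(B_i)$, where $B_i\in K$, $t_i>0$ and $\sum_i t_i=1$. Each $\Phi_j$ is an affine function of $M$ (Lemma \ref{lem:polyaff}), so $\Phi_j(X)=\sum_it_i\Phi_j(B_i)=0$. Hence $X\in\Phi^{-1}(0)=K$, and so $M(X)\in M(K)$. Note that this does not even use the first part of the lemma.

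The only delicate point is the smoothness of the family $A\mapsto\Psi_A$, that is, of $\mu$. This follows from constant rank, which is ensured by the kernel condition in \eqref{eq:abstractcondsK}. If one prefers to avoid pseudoinverses, one can instead pick $\mu$ locally by choosing a maximal set of rows of $\Phi'(A)$ that are linearly independent, and then glue the local choices with a partition of unity on $K$. Gluing is legitimate because the equation for $\mu$ is linear.
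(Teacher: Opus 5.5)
Your proposal is correct and follows essentially the same route as the paper. The matching steps are: the sphere identity $|A-B|^2=2\langle A,A-B\rangle$; the inclusion $2A\in\operatorname{im}\Phi'(A)^T$, which follows from $A\perp T_AK=\ker\Phi'(A)$; the Moore--Penrose choice $(\Phi'(A)^T)^{+}(2A)=2(\Phi'(A)^\dagger)^TA$, which is exactly the paper's $q_A$ and is smooth by constant rank; and the argument that each $\Phi_j$ is affine in the minors, which gives the convex hull statement.
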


\begin{proof}
Since $K\subset\{|A|=c\}$, we have
$A\perp T_AK=\ker\Phi'(A)$, and hence
$2A\in\operatorname{im}\Phi'(A)^T$. Since $K$ is smooth and $T_AK =\ker \Phi'(A)$, the maps $\Phi'(A)$ have
constant rank, so their Moore--Penrose inverses $\Phi'(A)^\dagger$ depend smoothly on
$A$. Thus
\[
A\mapsto q_A, \qquad q_A \equiv2\bigl(\Phi'(A)^\dagger\bigr)^T A\in\mathbb R^N
\]
is smooth and satisfies $\Phi'(A)^Tq_A=2A$. Define
$\Psi_A(X)=\langle q_A,\Phi(X)\rangle$ for all $X \in \R^{m\times n}$. This is a smooth family of
polyaffine functions and $\Psi_A=0$ on $K$. For
$A,B\in K$ we have
\[
\Psi_A'(A)[A-B]
=\langle2A,A-B\rangle
=|A-B|^2,
\]
where the last identity follows from $|A|=|B|=c$. Finally, suppose that $M(X)\in\operatorname{conv}M(K)$.\ Write $\Phi_j(X) =\tilde \Phi_j(M(X))$, for $\tilde \Phi_j(\mc M) = a_j + (b_j,\mc M)$ for $\mc M \in \R^{\tau(m,n)}$, thanks to Lemma \ref{lem:polyaff}.\ As $M(X)\in\operatorname{conv}M(K)$ and $\Phi_j(A) = \tilde{\Phi}_j(M(A)) = 0$ for all $A \in K$, we deduce $\tilde{\Phi}_j(M(X))= 0$, for all $j$.\ Thus $\Phi(X)=0$ and by assumption $X\in K$. The reverse inclusion is immediate.
\end{proof}

\begin{proof}[Proof of Theorem \ref{thm:main}]
For $A=\varphi(\xi,\eta)\in K$, Lemma \ref{lemma:defK} shows that $|A|^2=28$.
Moreover, Lemmas~\ref{lem:zero-set} and~\ref{lem:transversality} show that \eqref{eq:abstractcondsK} holds, 
so Lemma~\ref{lem:spherical-transversality-chord} shows that we can apply Lemma~\ref{lem:global-majorant} to find  a smooth strongly polyconvex
integrand $F$, with $\sup|F''|<\infty$, such that
\[
F\geq\dist^2_K\geq0,
\qquad
F^{-1}(0)=K.
\]
Since $\D u\in K$ almost everywhere in $\mathbb B^2$,
\[
\int_{\mathbb B^2}F(\D u)\,\d x=0
\leq\int_{\mathbb B^2}F(\D v)\,\d x
\qquad\forall v\in u+W^{1,2}_0(\mathbb B^2,\mathbb R^6).
\]
Suppose that equality holds. Then $F(\D v)=0$ almost everywhere, so
$\D v\in K$ almost everywhere. By Theorem~\ref{thm:rigid} below, since $u=v$ on $\mb S^1$ it is easy to see that we necessarily have $u=v$ in $\mb B^2$.
\end{proof}

\section{Rigidity of the inclusion: proof of Theorem \ref{thm:rigid}}

It is, once again, convenient to use complex notation.\ To this end, let us recall the chain rules for Wirtinger derivatives:
\begin{gather}
	\label{eq:chain}
	(g\circ f)_w = g_z \circ f f_w + g_{\bar z} \circ f \overline{f_{\bar w}}, \qquad
	(g\circ f)_{\bar w} = g_{z} \circ f f_{\bar w} + g_{\bar z} \circ f \overline{f_{w}},\\
	\label{eq:invs}
	(g^{-1})_w  = \frac{\overline{g_z}\circ g^{-1}}{\det \D g \circ g^{-1}}, \qquad (g^{-1})_{\bar w} = -\frac{g_{\bar z}\circ g^{-1}}{\det \D g\circ g^{-1}}.
\end{gather}

Now we can show Theorem \ref{thm:rigid}.\ The inclusion $\D v\in K$ a.e. means that there are measurable fields $p,q\colon \Omega\to \mb S^1$ such that
\begin{align}
\label{eq:system}
\begin{cases}
 v^1_z = c_1 p,  \\
 v^1_{\bar z}  = d_1 q,
 \end{cases}
 \qquad
 \begin{cases}
v^2_z= c_2 p^2, \\v^2_{\bar z}  = d_2 pq,
\end{cases}
\qquad
\begin{cases}
v^3_z = c_3 p^2 q, \\
 v^3_{\bar z}  =  d_3 p q^2.
 \end{cases}
\end{align}

The map $v^1$ is Lipschitz and has Jacobian constantly equal to 1 by Lemma \ref{lem:zero-set}.\ Thus, it is quasiregular and by \cite[Section 5.5]{Astala2009}, it admits a discrete branch set, namely it is locally invertible outside a discrete set.\ We will work in a small ball outside the branch set, and the global statement can be inferred by the connectedness of the complement of the branch set. Take then such a small ball in which it is invertible, and consider $f\equiv (v^{1})^{-1}$, which is again a Lipschitz map.\ Let us write 
$$P\equiv p\circ f, \qquad Q\equiv q\circ f, \qquad S\equiv PQ.$$ Since $\det \D v^1=1$, from \eqref{eq:invs}--\eqref{eq:system} we have
\begin{equation}
\label{eq:f}
f_w = \overline{c_1} \overline P, \qquad f_{\bar w} = -d_1 Q.
\end{equation}
Using \eqref{eq:chain}--\eqref{eq:system}, we now compute the equations for $V^2 \equiv v^2\circ f, V^3\equiv v^3 \circ f$, thanks to the chain rule for quasiregular maps \cite[Lemma 9.6]{Bojarski1983}. First, 
\begin{equation}
\label{eq:V2}
V^2_w = (\overline{c_1} c_2 -\overline{d_1} d_2)P = c_1 P, \qquad V^2_{\bar w} = (c_1 d_2 -c_2d_1) P S = d_1PS.
\end{equation}
Notice that \eqref{eq:V2} can be rewritten as a Beltrami equation for $V^2$:
\begin{equation}
\label{eq:V2belt}
V^2_{\bar w} = \Big(\frac{d_1}{c_1} S\Big) V^2_w.
\end{equation}
Similarly to \eqref{eq:V2}, we derive the equations for $V^3$:
\begin{equation}
\label{eq:V3}
V^3_w = (\overline{c_1} c_3 -\overline{d_1} d_3)S = 2 S, \qquad V^3_{\bar w} = (c_1 d_3 -d_1 c_3)S^2= -S^2.
\end{equation}
Notice that $|S|=1$: thus $V^3$ solves the differential inclusion $$\D V^3 \in \Gamma\equiv \{(2\eta,-\eta^2):\eta\in \mb S^1\}.$$
Now, $\Gamma$ is a degenerate elliptic curve in the sense of \cite{Lamy2024}.\ Indeed, if $\eta,\xi\in \mb S^1$ then $$|2 \eta-2\xi|^2 -|\eta^2-\xi^2|^2 = |\eta-\xi|^2(4-|\eta+\xi|^2) = |\eta-\xi|^4,$$
since $|\eta+\xi|^2 + |\eta-\xi|^2 = 4$ as $\eta, \xi\in \mb S^1$.\ On the other hand, $$|(2\eta,-\eta^2)-(2\xi,-\xi^2)|^2 = 4 |\eta-\xi|^2+ |\eta^2-\xi^2|^2 = |\eta-\xi|^2 (4+ |\eta+\xi|^2) \leq 8 |\eta-\xi|^2$$ so indeed $$\det(A-B)\geq c |A-B|^4 \quad \text{for all } A,B\in \Gamma.$$
It follows from the beautiful result \cite[Theorem 1.3]{Lamy2024} that $V^3$ is locally $W^{2,\infty}$ away from a discrete set $D$.\ Away from $D$, we see from \eqref{eq:V3} that $S$ is $W^{1,\infty}_\loc$, and so by \eqref{eq:V2belt} and standard elliptic regularity we have $V^2\in W^{2,p}_\loc$ for all $p<\infty$.\ Thus, from \eqref{eq:V2}, also $P\in W^{1,p}_\loc$ and similarly for $Q$.\ Since our fields have Sobolev regularity away from a discrete set, we can apply the compatibility condition $\p_{w \bar w}=\p_{\bar w w}$ and the chain rule to \eqref{eq:f}, \eqref{eq:V2} and \eqref{eq:V3} to get
\begin{equation}
\label{eq:chainrule}
\begin{cases}
c_1 P_w  =d_1 (P\bar S)_{\bar w},\\
c_1 P_{\bar w} = d_1 (PS)_w,\\
S_{\bar w} = -S S_w.
\end{cases}
\end{equation}
It is useful to write $P=e^{i \rho}$ and $S=e^{i \sigma}$, so that $P_w = i \rho_w P, P_{\bar w} = i \rho_{\bar w} P$ and similarly for $S$. Then \eqref{eq:chainrule} reduces to:
\begin{equation*}
	\label{eq:chainrule2}
	\begin{cases}
c_1 S\rho_w=d_1(\rho_{\bar w}+S\sigma_w),\\
c_1\rho_{\bar w}=d_1 S(\rho_w+\sigma_w),\\
\sigma_{\bar w} = -S \sigma_w.
\end{cases}
\end{equation*}
Subtracting the first two equations gives $\rho_{\bar w}=S\rho_w$, since $c_1+d_1\neq0$. Substituting this back into the second equation and using $c_1-d_1=1$, we obtain
$$
\rho_{\bar w}=d_1 S\sigma_w=\frac{iS}{2}\sigma_w.
$$
Combining these results, we finally get the identity:
\begin{equation}
\label{eq:simplesys}
 \rho_w = \frac i 2 \sigma_w.
\end{equation}
Since $\rho$ and $\sigma$ are real-valued, we then deduce that $h\equiv \rho + \frac i 2 \sigma$ is holomorphic.\ To conclude, we return to the third equation in \eqref{eq:chainrule}, noting that $S=e^{i \sigma} = e^{h-\bar h}$, thus
$$-\overline{h_w} e^{h-\bar h} =- h_w e^{2(h-\bar h)} \quad \implies \quad
h_w e^h = \overline{h_w e^h}.$$
Hence the holomorphic function $(e^h)_w$ is real-valued, thus constant. If this constant is zero, then $P$ and $S$ are constant, so $\D v$ is constant. We may therefore assume that this constant is non-zero and write
$$e^{h(w)}=a(w-w_0), \qquad a \in \R\setminus\{0\}, \qquad w_0 \in \C,$$
thus 
$$S(w)= \frac{w-w_0}{\overline{w-w_0}} = e^{2 i \theta},$$
using polar coordinates centered at $w_0$, i.e. $w-w_0=r e^{i \theta}$. Thus $\sigma(w)=2\theta$ and from this and the equality $a(w-w_0) = e^h = e^{\rho + \frac{i}{2}\sigma}$, we get
$$P(w) = e^{i\rho} = \lambda |w-w_0|^i, \qquad \lambda\in \mb S^1.$$
Finally, we can simply integrate \eqref{eq:f}, \eqref{eq:V2}, and \eqref{eq:V3} to get that, up to additive constants,
\begin{align*}
f(w) & = z_0+\bar \lambda (w-w_0) |w-w_0|^{-i},\\
V^2(w)& = \lambda(w-w_0)|w-w_0|^i,\\
V^3(w)& =\frac{(w-w_0)^2}{\overline{w-w_0}},
\end{align*}
where $z_0\in \C$. For the inverse, we have
$$w= v^1(z)= w_0 + \lambda (z-z_0)|z-z_0|^i,$$
and the formulas for $V^2$ and $V^3$ give
$$\D v=\D(S_\lambda u(\cdot-z_0))$$
as wished.

\let\oldthebibliography\thebibliography
\let\endoldthebibliography\endthebibliography
\renewenvironment{thebibliography}[1]{
	\begin{oldthebibliography}{#1}
		\setlength{\itemsep}{0.5pt}
		\setlength{\parskip}{0.5pt}
	}
	{
	\end{oldthebibliography}
}

{\small
	\bibliographystyle{abbrv-andre}
	\bibliography{library}
}

\end{document}